\newtheorem{theorem}{Theorem}[section]
\newtheorem{lemma}[theorem]{Lemma}
\newtheorem{proposition}[theorem]{Proposition}
\newtheorem{example}[theorem]{Example}
\theoremstyle{definition}
\newtheorem{definition}{Definition}[section]
\theoremstyle{remark}
\newtheorem{remark}{Remark}[section]
\author{Wei Lin}
\date{}
\title{On Crossing Ball Structure in Knot and Link Complements}	
\begin{document}
\maketitle
\begin{abstract}
We develop a word mechanism applied in knot and link diagrams for the illustration of a diagrammatic property. We also give a necessary condition for determining incompressible and pairwise incompressible surfaces, that are embedded in knot or link complements. Finally, we give a finiteness theorem and an upper bound on the Euler characteristic of such surfaces.
\end{abstract}

\section{Introduction} 

\subsection{Preliminary Discussion}
Let $ L \subset S^3$ be a link and $\pi(L) \subset S^2 (\subset S^3)$ be a regular link projection.  Additionally, let $F \subset S^3 - L$ be an closed incompressible surface.
In 1981 Menasco introduced his crossing ball technology for classical link projections \hyperref[B7]{[7]} that replaced $\pi(L)$ in $S^2$ with two 2-spheres, $S^2_\pm$, which had the salient features that $L$ was embedded in $S^2_+ \cup S^2_-$ and $S^3 \setminus (S^2_+ \cup S^2_-)$ was a collection of open $3$-balls---$B^3_\pm$ that correspond to the boundaries $S^2_\pm $ and a collect of {\em crossing balls}.  (Please see \S  \ref{1.2} for formal definition.)  Using general position arguments, this technology allows for placing $F$ into {\em normal position} with respect to $S^2_\pm$ so that $F \cap S^2_\pm$ is a collection of simple closed curves (s.c.c.'s).  When one imposes the assumption that $\pi(L)$ is an alternating projection, the normal position of an essential surface is exceedingly well behaved to the point where by direct observation one can definitively state whether the link is split, prime, cabled or a satellite.  As such, any alternating knot can by direct observation be placed into one of William Thurston's three categories---torus knot, satellite knot or hyperbolic knot \hyperref[B10]{[10]}.

One salient result from \hyperref[B7]{[7]} is that any essential surface in a non-split alternating link exterior will contain a meridional curve of a link component and, thus, studying such essential surfaces can be reduced to studying essential surfaces with meridional boundary curves that are meridianally incompressible or {\em pairwise incompressible}.  The importance of studying pairwise incompressible surfaces has been reflected in the work of numerous scholars. To name a few, Bonahon and Seibenmann's work on arborescent knots \hyperref[B3]{[3]}, Oertel's work on star links \hyperref[B9]{[9]}, Adams' work on toroidally alternating links \hyperref[B1]{[1]}, Adams' et. al work on almost alternating links \hyperref[B2]{[2]}, Fa's initial cataloging of incompressible pairwise incompressible patterns \hyperref[B4]{[4]}, Lozanoand-Przytycki work on $3$-braid links \hyperref[B6]{[6]}, and Hass-Thompson-Tsvietkova results on growth of the number of essential surfaces in alternating link complements \hyperref[B5]{[5]}.  However, to-date there has not been a criterion for determining whether a closed incompressible surface in an arbitrary link complement, when presented in Menasco's normal form, is pairwise incompressible.  The main result of this note partially fills this gap in the literature by giving such a necessary condition for determining pairwise compressibility/incompressibility---Theorem \ref{Main}.

For reader familiar with the arguments in \hyperref[B7]{[7]}, our approach to proving Theorem \ref{Main} should be of interest.  The techniques \hyperref[B7]{[7]} allow for multiple ways for arguing the existence of a meridional curve.  The most direct way is given in the proof of Theorem 2 (The Meridian Lemma) of \hyperref[B7]{[7]}.  In brief, once a surface $F$ is in normal position, for an alternating projection the existence of such a meridional curve is manifested by an innermost s.c.c. on $S^2_\pm$ of $F \cap S^2_\pm$.  An alternative indirect way of arguing comes from the proof of Lemma 2 of \hyperref[B7]{[7]}.  Again in brief, for $F$ in normal position one considers an arbitrary s.c.c. $C \subset F \cap S^2_\pm$.  Then when one considers how $C {\rm 's}$ imposes a ``nesting behavior'' on the subset of curves in $F \cap S^2_\mp$ that intersect $C$, the assumption of an alternating projection again forces the existence of some s.c.c. from this subset manifesting a meridian.

This latter technique for detecting the existence of a meridional curve is the main tool employed in this note. Our concluding remark is that it seems to be an underutilized tool to-date in works that have exported the crossing ball technology to study pairwise incompressibility in other settings.\par

\begin{figure}[h]
\begin{subfigure}{.49\textwidth}
  \centering
  \includegraphics[width=.8\linewidth]{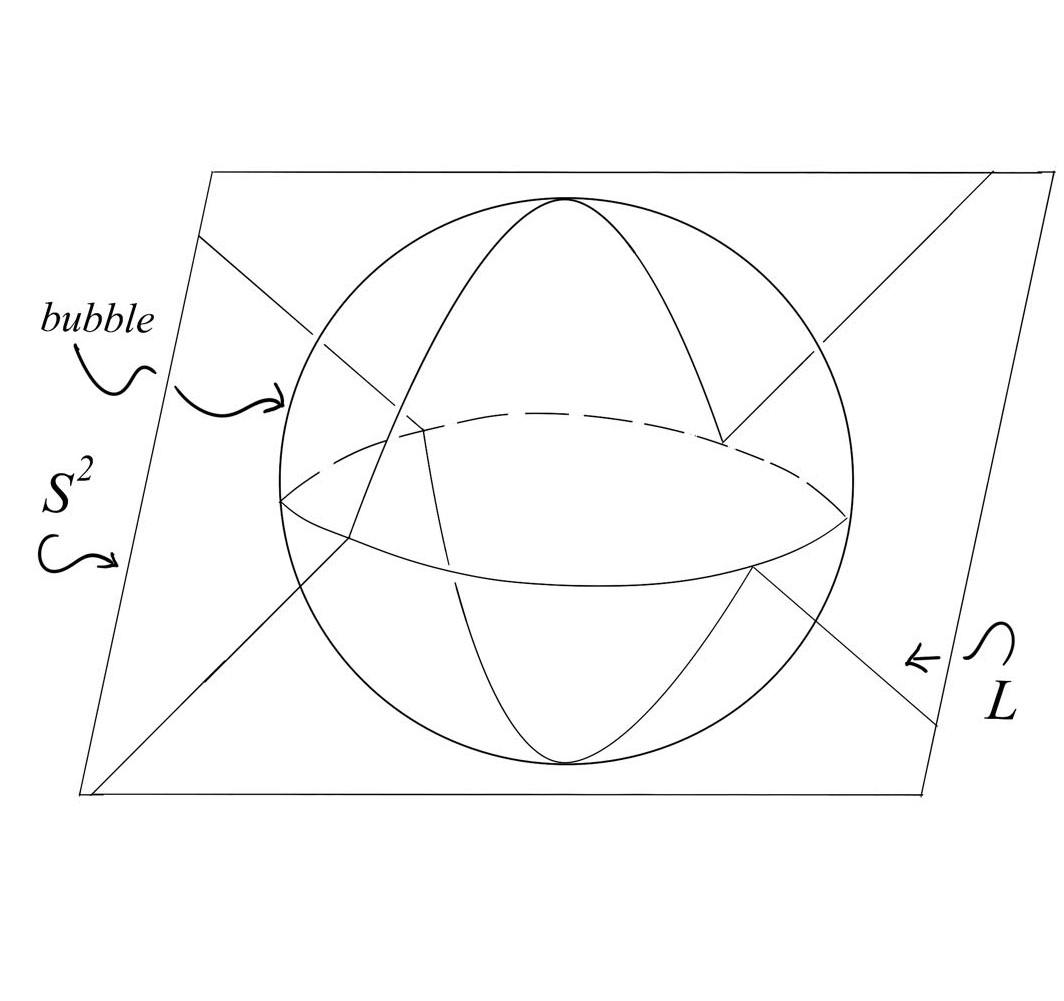}
  \caption{The bubble.}
  \label{the bubble}
\end{subfigure}
\begin{subfigure}{.49\textwidth}
  \centering
  \includegraphics[width=.8\linewidth]{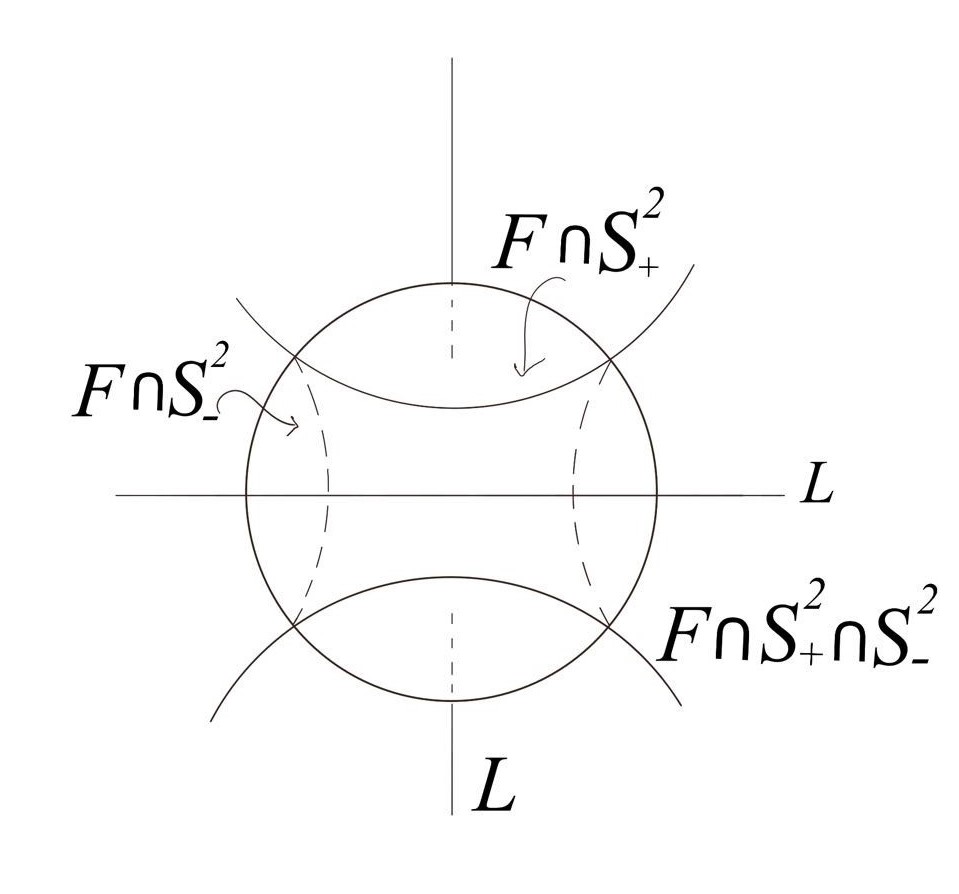}
  \caption{A local view of the crossing ball with a saddle. The solid arcs on the crossing ball are part of $F \cap S^{2}_{+}$, while the dotted arcs on the crossing ball are part of $F \cap S^{2}_{-}$.}
  \label{F0}
\end{subfigure}
\caption{}
\label{Bubble}

\end{figure}

\subsection{Main Results}\label{1.2}

To review, a surface $F$ properly embedded in the link $L$ complement in $S^3$ is called \emph{pairwise incompressible} if for each disk $D$ in $S^3$ meeting the $L$ transversely in one point, with $D\cap F=\partial D$, there is a disk $D'\subset F\cup L$ meeting $L$ transversely in one point, with $\partial D'= \partial D$. We consider the following problem: Given the condition that a surface is incompressible in the link complements, how to characterize such a surface that is also pairwise incompressible? For the rest of this paper, if a surface $F$ is connected, and it is both incompressible and pairwise incompressible, we say that $F$ is \emph{essential}, or $F$ is an \emph{essential surface}.


As before, let $\pi(L)$ be a connected regular projection from a knot or a link $L \subset S^{3}$ to a sphere $S^{2}$. We place a ball at each crossing of $\pi(L)$, which we refer to as a bubble $B$ (See Figure \ref{the bubble}). At each crossing, both the overstrand and the understrand are in the $\partial B$. We define $S^{2}_{+}$ to be the sphere $S^{2}$ where the equatorial disk in each bubble is replaced by the upper hemisphere of the bubble, and $B^{3}_{+}$ to be the 3-ball bounded by $S^{2}_{+}$ that does not contain any bubble $B$. Similarly, we can define $S^{2}_{-}$ and $B^{3}_{-}$ when replacing equatorial disk in each bubble by lower hemisphere of the bubble. Since such operations can be performed on an arbitrary link diagram on $S^{2}$, we use this convention that all $\pi(L)$ of this note are assumed to be endowed with the crossing ball structure.\par

Let $F\subset S^{3}-L$ be a separating sphere, a closed essential surface, or an essential surface with meridional boundaries (when $L$ is a knot, apparently we do not take the separating sphere into account). $F$ can be isotoped to intersect each bubble in a set of saddles. We assume the surface $F$ is connected and is chosen to minimize the total number of saddles and curves in lexicographical order. We can replace $F$ (isotope when it is essential) with $F'$  such that it is in normal position (details are shown in \S \ref{standard}). In general, when $F$ is essential, we can restrain $F$, so it is in such a position, that for each $C \subset F \cap S^{2}_{\pm}$: $C$ bounds a disk in either $B^{3}_{\pm}$; $C$ does not pass through a bubble twice; and, all meridional boundaries of $F$, i.e. \emph{punctures}, are on the arcs of $F\cap S^{2}_{+} \cap S^{2}_{-}$ and away from the crossing balls. As one stands inside of $B^{3}_{+}$ and look at  $S^{2}_{+}$, one should be able to see the graph of intersection $F \cap S^{2}_{+}$. We call this situation \emph{from $B^{3}_{+}$'s side of view} (or respectively, \emph{from $B^{3}_{-}$'s side of view}, if one stands in $B^{3}_{-}$).\par

For a loop $C \subset F \cap S^{2}_\pm$ we consider it the union of two types of arcs: type $I$ $A_{1} \subset C \cap S^2_+ \cap S^2_-$; and, type $II$ $A_2 \subset C \setminus C \cap S^2_+ \cap S^2_-$. (Notice an arc of type $I$ is away from bubbles, while an arc of type $II$ is on the boundary of a bubble and away from punctures). Arcs of type $I$ whose ends encounter two crossing balls that both lie on same side of $C$, are assigned a label $R$, while arcs of type $I$ whose ends encounter two crossing balls that lie on different sides of $C$, are assigned a label $\emptyset$, which will be omitted. Additionally, arcs of type $I$ that have intersections with $L$ are also assigned a label $P^{i}$, where $i$ is the number of times it intersects with $L$. Arcs of type $II$ are just assigned with a label $S$.\par

With the above-mentioned concepts, we now introduce the word mechanism. We consider the nontrivial situation when $C$ has nonempty intersections with saddles and give the following definition:\\ \\

\begin{figure}[h] 
\centering
\includegraphics[width=0.5\textwidth]{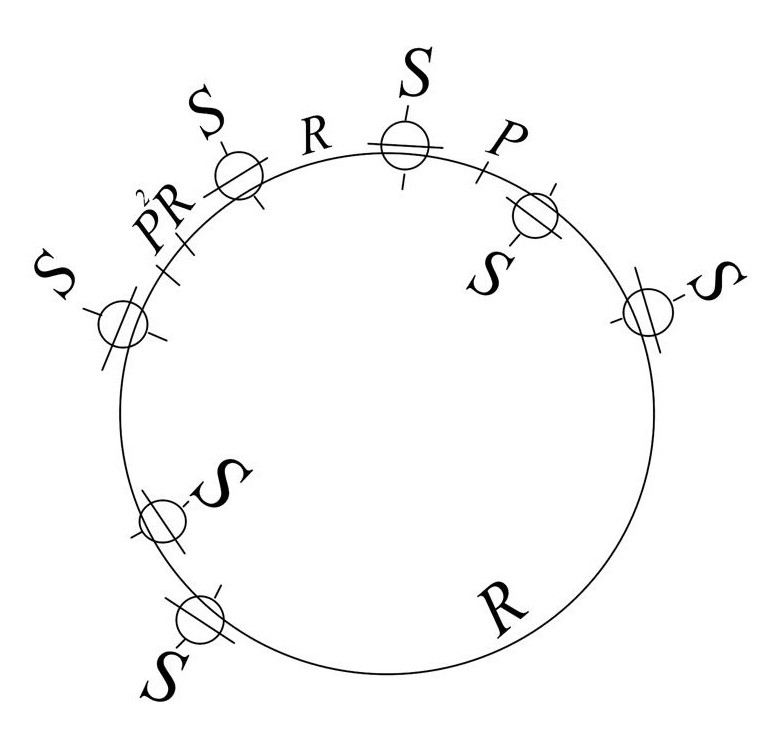}
\caption[A cyclic word.]{Start with any label, and choose an orientation. The labels are recorded as a cyclic word that can be read as $SP^{2}RSRSPSSRSS$. To be specific, we use the $i$-th power of $P$, $P^{i}$  to represent the surface is punctured $i$ times on the type-$II$ arc. $R$ is a label, that $SRS$ or $SP^{i}RS$ stands for successive saddles lie on the same side of the simple closed curve, regardless of the possible puncture(s) $P^i$ in between them. $SS$ or $SP^{i}S$ means the successive saddles lie on different sides of the simple closed curve (since the label $\emptyset$ of type $II$ arc is omitted), regardless of the $P^i$ in between them. The choices of starting label, orientation to record, or the order of $P$'s and $R$ on the same type $I$ arc, do not make a cyclic word different.}
\centering

\label{cyclic word}
\end{figure}

\theoremstyle{definition}
\begin{definition}[cyclic word]\label{def1}

 A \emph{cyclic word} $\omega(C)$ is a word obtained by recording in order the labels of the arcs of $C$. See Fig. \ref{cyclic word}.\par

\end{definition}

From a cyclic word $\omega(C)$ and the number of loops at each bubble intersects, we will be able to obtain a \emph{virtual word} $\omega^v(C)$ (see Definition  \ref{bubble}). We will show that each virtual word related to the essential surfaces in normal position satisfies a condition defined in \S \ref{C3} as \emph{$\omega$-reducible}. The diagrammatic property mentioned at the beginning of this note can be illustrated by the following two theorems:\par

\begin{theorem} \label{Main} Let $L \subset S^{3}$ be a link, $F \subset S^{3}-L$ be a separating sphere or an essential surface that is in normal position, then $\omega^{v}(C)$ is $\omega$-reducible for each simple closed curve $C \subset F \cap S^{2}_{\pm}$. \par
\end{theorem}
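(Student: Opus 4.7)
The plan is to argue by contradiction and exploit the nesting structure of curves on the opposite side, very much in the spirit of the indirect argument from Lemma 2 of \hyperref[B7]{[7]} that the introduction already flagged as our main tool. Fix a simple closed curve $C \subset F \cap S^{2}_{+}$ (the case $C \subset F\cap S^{2}_{-}$ is symmetric). Since $F$ is in normal position, each bubble that $C$ visits is cut by a single saddle, and $C$ enters and exits such a bubble along two type-$II$ arcs of $\partial B$. Reading off these type-$II$ labels $S$ together with the interleaved type-$I$ labels in $\{R,\emptyset\}$ decorated by the punctures $P^{i}$ produces $\omega(C)$; passing to $\omega^{v}(C)$ records the additional data of how many other loops of $F\cap S^{2}_{+}$ share each such bubble with $C$, as in Definition \ref{bubble}.

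Assume, for contradiction, that $\omega^{v}(C)$ is not $\omega$-reducible. The strategy is to translate this non-reducibility into an explicit local configuration manifesting either (i) a disk or isotopy that strictly decreases the lexicographic pair (number of saddles, number of loops) of $F$, contradicting the minimality built into the choice of $F$, or (ii) a compressing disk or meridional compressing disk for $F$, contradicting the hypothesis that $F$ is essential (respectively, that the separating sphere is nontrivial). The bridge between the combinatorics of $\omega^{v}(C)$ and a geometric disk is provided by the dual arcs on $S^{2}_{-}$ that are glued at each bubble crossed by $C$: these arcs assemble into a collection of simple closed curves of $F \cap S^{2}_{-}$ whose nesting pattern on $S^{2}_{-}$ is prescribed by the $R$ versus $\emptyset$ labels, and whose puncture data is prescribed by the $P^{i}$ labels.

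I would carry out the proof in three steps. First, from $\omega^{v}(C)$ I reconstruct the arrangement of dual curves on $S^{2}_{-}$ by tracing, bubble by bubble, which side of $C$ the next saddle of $C$ lies on; the virtual data ensures that arcs from loops other than $C$ that share bubbles with $C$ are accounted for. Second, I use the failure of $\omega$-reducibility to isolate an innermost such dual curve $C^{-}\subset F \cap S^{2}_{-}$ whose disk in $B^{3}_{-}$, obtained from the normal-position condition that $C^{-}$ bounds a disk in either $B^{3}_{+}$ or $B^{3}_{-}$, has boundary pattern on the bubbles and punctures that is forced by the offending subword of $\omega^{v}(C)$. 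Third, I apply a Menasco-type push-across argument to this disk: if it avoids $L$ and avoids every bubble that is not already pierced by the saddle structure, it is a compressing disk or a disk along which $F$ can be isotoped to remove at least one saddle; if it meets $L$ exactly once, it is a pairwise compressing disk; either conclusion contradicts our assumption on $F$.

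The main obstacle I anticipate is the second step: making the passage from ``$\omega^{v}(C)$ is not $\omega$-reducible'' to ``there is an innermost dual disk with controlled boundary pattern'' rigorous, because the correspondence between cyclic subwords and geometric disks must be unambiguous even when several loops share a bubble and even when the disk may be forced into $B^{3}_{+}$ rather than $B^{3}_{-}$. Once that correspondence is nailed down, the reduction/compression moves in step three are standard crossing-ball manipulations, and the separating-sphere variant is handled by replacing ``essential'' with ``nontrivially separating'' throughout. So the genuine work of the proof is combinatorial, concentrated in the dictionary between $\omega$-irreducibility and the existence of an innermost disk witnessing a contradiction to minimality or essentiality.
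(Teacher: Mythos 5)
Your overall strategy---assume $\omega^{v}(C)$ is $\omega$-irreducible and use the nesting of the dual curves of $F\cap S^{2}_{-}$ to manufacture a contradiction with either the lexicographic minimality of $F$ or its (pairwise) incompressibility---is indeed the same indirect, Lemma-2-of-\hyperref[B7]{[7]} style argument the paper uses. But the proposal has a genuine gap exactly where you flag ``the main obstacle'': the passage from ``$\omega^{v}(C)$ is not $\omega$-reducible'' to a concrete geometric configuration (an offending loop of $F\cap S^{2}_{-}$, or in your phrasing an innermost dual disk with controlled boundary pattern) is the entire content of the paper's Lemma \ref{lemma}. That lemma is proved by a nontrivial combinatorial induction: one introduces connecting operations and paired up saddles (Definition \ref{connection}), cuts $C$ into partial words, and shows that if every saddle met by $C$ were paired up then either a nested sequence of $R$-$\omega$-irreducible partial words terminates in a contradiction (forcing $\omega(C)$ to reduce after all) or an unpaired saddle forces a loop of $F\cap S^{2}_{-}$ through both sides of a bubble. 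None of this bookkeeping appears in your outline, and without it the claim in your second step is an assertion, not a proof; ``isolate an innermost dual curve whose disk is forced by the offending subword'' is precisely what must be established.

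A second, more localized error: you assert that normal position guarantees ``each bubble that $C$ visits is cut by a single saddle.'' That is false---condition (2) only says no single loop meets a bubble in more than one arc; other loops of $F\cap S^{2}_{\pm}$ may contribute additional saddles to the same bubble. This is the whole reason the paper introduces virtual bubbles and the virtual word (Definition \ref{bubble}, Definition \ref{virtual word}), builds the virtual diagram (Definition \ref{virtual}), and proves Proposition \ref{theorem2} to transfer a bad configuration in the virtual diagram back to a loop of $F\cap S^{2}_{-}$ passing through an actual bubble twice (same side contradicting saddle minimality, opposite sides contradicting pairwise incompressibility). Your description of $\omega^{v}(C)$ as merely recording how many loops share a bubble, combined with the one-saddle-per-bubble assumption, skips this transfer step entirely; you would need both the multi-saddle replacement $\overset{_l}{S}\to S^{2l+1}$ and an analogue of Proposition \ref{theorem2} to make the contradiction land on the true diagram of $F$. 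As written, the proposal is a plausible plan that matches the paper's architecture but omits its two load-bearing ingredients.
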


If in the link complement, there exists a separating sphere or a closed essential surface $F\subset S^{3}-L$, we also give a theorem to show that the configurations of such surfaces are finite, and we can give an upper bound on the Euler characteristic of such surfaces through the \emph{pullback graph} (see \S \ref{chapter final}) of $F$, i.e. the surface $F$ endowed with a 4-valent graph structure on which some of the edges are labeled with $R$. In the following theorem, $|R|$ stands for the total number of type $I$ arcs in $F\cap S^{2}_{+}$, or equivalently, in $F\cap S^{2}_{-}$, marked with $R$. And $n$-gons correspond to disks of $F\cap B^{3}_{\pm}$ which intersect with $n$ saddles:\par

\begin{theorem}\label{characterization}

Let $L\subset S^{3}$ be a link, $F\subset S^{3}-L$ be a separating sphere, or a closed essential surface in normal position. Then:\par
(a) If $|R|=4$ or $6$, $F$ is a sphere. If $|R|=8$, $F$ is either a sphere or a torus.\par
(b) The maximum vertex number of a region in the pullback graph is bounded by $|R|-2$.\par
(c) For fixed $|R|$, there are only finitely many such surfaces $F$ (up to isotopy when $F$ is essential).\par
(d) The Euler characteristics $\chi(F)$ of $F$ and the number of $n$-gons $F_n$ subject to the following restriction :

\begin{equation*}
\chi(F)=\sum_{n=2}^{|R|-2}F_{n}-\sum_{n=2}^{|R|-2}\frac{n}{4}F_{n}=\sum_{n=2}^{|R|-2}F_{n}-|S| \leq |R|-|S|\\ \\
\end{equation*}

\end{theorem}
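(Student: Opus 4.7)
The plan is to extract everything from the pullback graph structure on $F$. I would first verify that this graph is 4-valent with vertex set the saddles (so $V=|S|$), edge set the type~$I$ arcs (so $E=2|S|$ by handshake at 4-valent vertices), and face set precisely the disks of $F\cap B^{3}_{\pm}$ (contributing $\sum_{n}F_{n}$). Euler's formula then gives $\chi(F)=V-E+F=|S|-2|S|+\sum F_{n}=\sum F_{n}-|S|$, and the identity $|S|=\sum(n/4)F_{n}$ is the face--edge incidence count $\sum nF_{n}=2E=4|S|$. These two manipulations already deliver the equalities asserted in part~(d).

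The remaining inequality $\sum F_{n}\leq|R|$ is where Theorem~\ref{Main} is brought in. Each face is bounded by a simple closed curve $C\subset F\cap S^{2}_{\pm}$ whose virtual word $\omega^{v}(C)$ is, by Theorem~\ref{Main}, $\omega$-reducible. The plan is to read off from the definition in \S\ref{C3} that $\omega$-reducibility forces at least two $R$-labels in every such $\omega^{v}(C)$. A double count then closes the estimate: each $+$ face contributes at least two of its boundary arcs to the $R$-labels counted from the $+$ side, and each such $R$-labeled type~$I$ arc lies in the boundary of a unique $+$ face, so $2\cdot\#\{+\text{ faces}\}\leq|R|$. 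The analogous bound on the $-$ side, combined with the stated identification of $R$-counts from the two sides, gives $2\sum F_{n}\leq 2|R|$, completing~(d).

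For~(b), I would argue directly from the $\omega$-reducibility criterion that a cyclic word from an $n$-gon face with $n>|R|-2$ has too many saddles relative to available $R$-labels for the reductions to go through; the bound $n\leq|R|-2$ is thus structural, extracted from the word mechanism rather than from Euler counting. Given~(b), part~(c) is then mechanical: $\sum nF_{n}=4|S|$ combined with $n\leq|R|-2$ and $\sum F_{n}\leq|R|$ yields $|S|\leq|R|(|R|-2)/4$, bounding both $|S|$ and $\sum F_{n}$ purely in terms of $|R|$. Since the pullback graph is a 4-valent $R$-labeled graph on a closed surface with these bounded parameters, there are only finitely many combinatorial types, hence only finitely many $F$ (up to isotopy when $F$ is essential).

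For~(a), the small cases are handled by combining the inequalities from (b) and (d) with the parity of $\chi$ for a closed surface embedded in $S^{3}$. When $|R|=4$, (b) forces every face to be a 2-gon, hence $F_{2}=2|S|$ and $\chi(F)=|S|$; the constraint $F_{2}\leq 4$ leaves $|S|\in\{1,2\}$, and since no closed non-orientable surface embeds in $S^{3}$ the odd value $\chi=1$ is excluded, giving $|S|=2$ and $F$ a sphere. For $|R|=6$ and $|R|=8$ I would enumerate the integer tuples $(F_{2},F_{3},\dots)$ compatible with $\sum F_{n}\leq|R|$, $\sum nF_{n}=4|S|$ and $n\leq|R|-2$, then eliminate the remaining higher-genus candidates by showing that no $R$-labeled 4-valent graph realising them can satisfy $\omega$-reducibility at every face. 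This last step---the case-by-case exclusion of higher-genus configurations for $|R|=6,8$---is the main obstacle, since it rests on the fine content of the $\omega$-reducibility definition in \S\ref{C3} rather than on the purely Euler-type counting that drives (c) and (d).
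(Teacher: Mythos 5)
Your parts (c) and (d) are essentially the paper's argument: the Euler count $\chi(F)=\sum F_n-|S|$ with $\sum nF_n=4|S|$, and the bound $\sum F_n\le|R|$ obtained because every face boundary must carry at least two $R$'s (this is exactly Lemma \ref{algorithm}, deduced from Theorem \ref{Main} since a word in $S$'s and $P$'s alone is $\omega$-irreducible). The genuine gap is in (b). You propose to extract $N\le|R|-2$ from the $\omega$-reducibility of the $N$-gon's own word ("too many saddles relative to available $R$-labels"), but reducibility of a single word imposes no such constraint: the cyclic word $S^{i}RS^{i}R$ reduces by (II) and then (I) to $\emptyset$, so a single face can meet arbitrarily many saddles while its word contains only two $R$'s, and the global quantity $|R|$ never enters your criterion. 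The paper's bound comes from a different, genuinely geometric mechanism, Lemma \ref{lower bound}: the loop $\partial D_N$ separates $S^{2}_{+}$ into two regions containing the $N$ saddles it meets, and a nested, finitely terminating count over the loops of $F\cap S^{2}_{+}$ lying in those regions (pairing up saddles where possible and recursing into inner regions where not) produces at least $N$ additional $R$-labeled type $I$ arcs, disjoint from the at least two on $\partial D_N$ itself, giving $N+2\le|R|$. Nothing in your outline supplies this nesting argument, and since your (c) and your small-$|R|$ analysis in (a) are built on (b), the gap propagates.

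For (a) you explicitly leave the elimination of higher-genus candidates at $|R|=6,8$ as an open obstacle, and your plan (ruling out abstract $4$-valent $R$-labeled graphs by checking $\omega$-reducibility at every face) is not how the paper proceeds: the paper uses Lemma \ref{algorithm} to bound the number of loops of $F\cap S^{2}_{\pm}$ by $|R|/2$ (two, three, or four loops), then enumerates the finitely many resulting configurations of curves and saddles and computes $\chi$ for each, finding only spheres for $|R|=4,6$ and exactly two torus configurations for $|R|=8$. Your $|R|=4$ computation (all faces $2$-gons, $\chi=|S|\le2$, odd $\chi$ excluded by orientability in $S^{3}$) is reasonable but again rests on the unproved (b).
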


\section{Normal Position and Motivating Examples}

\subsection{Normal position}\label{standard}

Suppose $F \subset S^{3}-L$ is a closed surface, or a surface whose boundary curves are all meridians of $L$ which do not intersect the bubbles. To each component $C$ of $F \cap S^{2}_{\pm}$ can be associated a cyclic word. Note that Definition  \ref{def1}  modifies the original representation in \hyperref[B7]{[7]}. When $\pi(L)$ is alternating, $SP^{2i+1}S$ of \hyperref[B7]{[7]} will be denoted as $SP^{2i+1}RS$ in this note; If the puncture number is even, the notation of this note is $SP^{2i}S$, according to the alternatingness of link diagram. \par

\begin{proposition} \label{sp1}

Let $F\subset S^{3}-L$ be a separating sphere or a closed essential surface, then $F$ can be replaced by another surface $F^{'}$ of the same type (isotopic to $F$ when it is closed incompressible pairwise incompressible) that is in the following position:\par
$(1)$ No word $\omega (C)$ associated to $F$ is empty.\par
$(2)$ No loop of $F \cap S^{2}_{\pm}$ meets a bubble in more than one arc.\par
$(3)$ Each loop of $F \cap S^{2}_{\pm}$ bounds a disk in $B^{3}_{\pm}$.\par

\end{proposition}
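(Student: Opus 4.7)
The plan is to pick $F$ of the prescribed topological type (an isotopic representative when $F$ is essential, any separating sphere when $F$ is of that type) so as to minimize, in lexicographic order, first the number of saddles and then the number of components of $F\cap(S^2_+\cup S^2_-)$. I then show that a violation of any of (1)--(3) admits a modification strictly decreasing this pair, contradicting minimality. The two ambient facts used repeatedly are the incompressibility of $F$ in $S^3\setminus L$ (automatic when $F$ is a sphere) together with the irreducibility of $S^3\setminus L$.

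For (1), if $\omega(C)=\emptyset$ for some component $C$ of $F\cap S^2_\pm$, then $C$ avoids every bubble and every puncture, so $C$ bounds two disks on $S^2_\pm$; choose $D$ innermost with respect to $F\cap S^2_\pm$. Then $D\subset S^3\setminus L$ meets $F$ only in $\partial D$. Incompressibility gives a disk $\Delta\subset F$ with $\partial\Delta=C$, and irreducibility makes $D\cup\Delta$ bound a ball across which I isotope $F$. The new surface has the same saddle count and strictly fewer intersection curves.

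For (2), if a loop $C$ meets some bubble $B$ in two or more arcs of $F\cap\partial B$, pick a pair $\alpha_1,\alpha_2$ that is adjacent on $\partial B$ and apply the standard Menasco saddle-exchange along a disk of $S^2_\pm$ between them. The move is supported in a neighborhood of $B$ and replaces the two saddles of $B$ on $C$ by a non-saddle configuration there, strictly reducing the saddle count. For (3), if a loop $C$ bounds a component $X$ of $F\cap B^3_\pm$ that is not a disk, then, since $B^3_\pm$ is an open ball disjoint from $L$ and the only incompressible surface properly embedded in a ball with circle boundary is a disk, $X$ is compressible in $B^3_\pm$, yielding a compressing disk $E\subset B^3_\pm\setminus L$. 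Incompressibility of $F$ gives $\Delta\subset F$ with $\partial\Delta=\partial E$, and irreducibility lets me isotope $F$ across the ball bounded by $E\cup\Delta$, strictly reducing the saddle count if $\Delta$ meets a bubble and otherwise the curve count.

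The step I expect to be the main obstacle is (2): one must verify that the local saddle-exchange move not only decreases the saddle count but also keeps every new intersection component a simple closed curve, and does not inflate the curve count by more than the saddle saving affords, since saddles dominate the lexicographic order. If the move incidentally creates an empty-word component, one iterates (1). The sphere-versus-essential distinction is mild, because all three surgeries produce a surface of the same topological type as $F$, and incompressibility is automatic for a sphere.
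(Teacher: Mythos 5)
Your overall framework (lexicographic minimization of saddles then curves, innermost-curve surgeries for (1) and (3)) matches the standard argument the paper invokes by citing Menasco's Lemma~1 of [7]. But there is a genuine gap in your treatment of condition (2). You state at the outset that the only ambient facts you use are incompressibility and irreducibility, and accordingly your argument for (2) is a purely local ``saddle-exchange'' near the bubble $B$ that is claimed to strictly reduce the saddle count. This handles only the case in which the two arcs of $C$ cross the bubble on the \emph{same} side of the over- (resp.\ under-) strand. In the other case --- the two arcs lie on opposite sides of the strand, so the loop meets both sides of the bubble --- no move supported in a neighborhood of $B$ can remove the saddles without pushing $F$ through $L$: the surface runs meridionally around the strand there, and what this configuration produces is a pairwise compressing disk (a disk meeting $L$ once). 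Eliminating it requires pairwise incompressibility (for an essential $F$) or a parity/separation argument (for a separating sphere); this is exactly what the paper points out when it says condition (2) rests on \emph{both} pairwise incompressibility and minimality of saddles, with ``a loop meeting both sides of a bubble will manifest a meridian curve.'' An incompressible but pairwise compressible surface shows the hypothesis cannot be dropped, so a proof that never invokes pairwise incompressibility cannot establish (2).

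A secondary issue: your reductions for (1) and (3) isotope $F$ across the ball bounded by $D\cup\Delta$ using ``irreducibility of $S^3\setminus L$.'' But the proposition explicitly allows $F$ to be a separating sphere, i.e.\ $L$ may be split, and then $S^3\setminus L$ is \emph{not} irreducible. This is precisely why the statement says $F$ is ``replaced by another surface $F'$ of the same type'' (isotopic only in the essential case), and why the argument in [7] proceeds by cut-and-paste replacement (swap $\Delta$ for $D$, checking the new surface has the same type and smaller complexity) rather than by an isotopy across a ball. Your step can be patched this way, but as written it does not cover the separating-sphere case. Finally, in (2) you should also verify that the two arcs you exchange along a disk of $S^2_\pm$ can be chosen innermost (other loops may separate them on the bubble), which is a standard but necessary refinement.
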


See \hyperref[B7]{[7]} Lemma 1 for proof. Notably it is independent of the alternatingness of $\pi(L)$. We say $F$ is in \emph{normal position of closed surface} if it satisfies all above conditions. Conditions (1) and (3) are the results of the incompressibility of $F$ and are independent of the alternatingess of $L$. Condition (2) is the result of both the pairwise incompressibility of $F$ and the choice of the isotopic class of $F$ to minimize saddles number, which are both independent on the alternating property of $\pi (L)$. A loop meeting both sides of a bubble will manifest a meridian curve, while a loop meeting the same side of a bubble will violate the choice of minimal saddles.\par

In addition to (1), (2) and (3) conditions for closed surface in normal position, we claim $F$ can be isotoped to satisfy additional three conditions if it has meridional boundaries:\par

\begin{proposition}\label{sp2}
Let $F\subset S^{3}-L$ be an essential surface with meridional boundaries, then $F$ can be isotoped so that, in additional to (1), (2) and (3), $F$ satisfies the following conditions:\par
(4) No loop of $F \cap S^{2}_{\pm}$ meets both a bubble and an arc of $L \cap S^{2}_{+} \cap S^{2}_{-}$ having an endpoint on that bubble.\par
(5) No loop of $F \cap S^{2}_{\pm}$ meets a component of $L \cap S^{2}_{\pm}$ more than once.\par
(6) There does not exist two loops $\alpha \subset F \cap S^{2}_{+}$ and $\beta \subset F \cap S^{2}_{-}$, with arcs $a,b \subset \alpha \cap \beta$ such that the interiors of $a,b$ are contained in adjacent components of $S^{2}_{+} \cap S^{2}_{-}-L$, and $\partial a \cap \partial b = \emptyset$.\par

\end{proposition}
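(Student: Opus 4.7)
The plan is to argue by contradiction using minimality of the saddle count, in the style of Proposition \ref{sp1}. Assume $F$ has already been isotoped to satisfy (1),(2),(3) and, within its essential isotopy class, minimizes the pair (number of saddles, number of s.c.c.'s of $F\cap S^2_\pm$) in lexicographical order. Suppose one of (4),(5),(6) fails. In each case, the strategy is to extract from the failure a \emph{meridional compressing disk} $D\subset S^3$---a disk meeting $L$ transversely in one point with $D\cap F=\partial D$---and apply pairwise incompressibility to obtain $D'\subset F\cup L$ with $\partial D'=\partial D$, also meeting $L$ once. The $2$-sphere $D\cup D'$ bounds a $3$-ball, and pushing $F$ across this ball yields an isotopy of $F$ that strictly lowers the saddle or curve count, contradicting the minimality assumption. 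One then iterates this procedure, which terminates because $(\text{saddles},\text{curves})$ is a well-ordered pair.

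For (4), suppose a loop $C\subset F\cap S^2_+$ contains a type-$II$ arc $s$ on a bubble $B$ together with a puncture $p$ on an edge $\ell$ of $\pi(L)$ having an endpoint on $\partial B$. A short sub-arc of $C$ running from $s$ to $p$, a sub-arc of $\ell$ from $p$ into $B$, and a short arc on $\partial B\cap S^2_+$ co-bound a disk $D\subset B^3_+$ meeting $L$ transversely once, at the point where $\ell$ enters $B$; this is the desired meridional compressing disk. Condition (5) is handled analogously: two punctures $p_1,p_2$ on the same component of $L\cap S^2_+$ lying on a single loop $C$ determine a disk in $S^2_+$ (bounded by one sub-arc of $C$ and one sub-arc of $L$ from $p_1$ to $p_2$) that can be pushed slightly off $L$ to meet $L$ in a single transverse point; again pairwise incompressibility converts this into an isotopy reducing complexity.

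For (6), the failure provides loops $\alpha\subset F\cap S^2_+$ and $\beta\subset F\cap S^2_-$ sharing arcs $a,b$ whose interiors lie in adjacent components $R_a,R_b$ of $S^2_+\cap S^2_-\,-\,L$. Adjacency furnishes exactly one edge $\lambda$ of $\pi(L)$ separating $R_a$ from $R_b$. The four sub-arcs of $\alpha\cup\beta$ determined by the endpoints of $a,b$ (two on each sphere) assemble, together with short push-offs of $a$ and $b$, into a disk $D\subset S^3$ whose boundary lies on $F$, whose interior meets $L$ transversely in exactly one point of $\lambda$, and is otherwise disjoint from $L$. This $D$ is the desired meridional compressing disk, and the general strategy of the first paragraph concludes the argument.

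The main obstacle is condition (6). One must verify that the four sub-arcs really do close up into an embedded disk rather than an annulus, a M\"obius band, or an immersed surface, and that the resulting $\partial D$ on $F$ does not already bound a disk in $F$ disjoint from $L$ (which would yield no useful compression). The hypothesis $\partial a\cap \partial b=\emptyset$ is what rules out the degenerate configurations, and a short case analysis on how $\alpha$ and $\beta$ enter and leave neighborhoods of $\partial a$ and $\partial b$ in the two hemispheres is needed in order to select the correct pair of sub-arcs on each side; in the degenerate subcases the configuration collapses either to a violation of (2) (a loop hitting a bubble twice, hence contradicting Proposition \ref{sp1}), to a violation of (4) or (5) already handled, or to an even simpler saddle-reducing isotopy.
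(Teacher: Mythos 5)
The paper itself does not prove this proposition: it simply defers to Menasco's Lemma 3 in \hyperref[B8]{[8]}, where the argument combines meridional (pairwise) incompressibility with isotopies reducing the lexicographic complexity (saddles, curves). Your high-level plan is in that spirit, but as written it has genuine gaps. The central one is that the disks you construct for (4) and (5) are not pairwise-compressing disks in the sense of the definition used in this paper: pairwise incompressibility only applies to a disk $D$ with $D\cap F=\partial D$, i.e.\ with boundary lying entirely on $F$, meeting $L$ transversely in one \emph{interior} point. Your disk for (4) has boundary consisting of a sub-arc of $C$, a sub-arc of the link edge $\ell$, and an arc on $\partial B$; your disk for (5) has a boundary arc that is a push-off of $L$. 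Neither boundary lies on $F$, so the hypothesis cannot be invoked; such disks are of boundary-compression type, and meridional boundary-incompressibility is not among the assumptions. (Moreover, a disk whose interior lies in $B^{3}_{+}$ cannot meet $L$ transversely in an interior point at all, since $L\subset S^{2}_{+}\cup S^{2}_{-}$, so the claimed intersection point ``where $\ell$ enters $B$'' does not make sense as stated.)

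The complexity-reduction step is also asserted rather than proved. The sphere $D\cup D'$ meets $L$ in two points, and the ball you propose to push $F$ across may contain an arc of $L$ as well as other sheets of $F$; making the replacement an isotopy in $S^{3}-L$ requires an innermost-type argument, and it is nowhere shown that the move strictly decreases the number of saddles or intersection curves. In case (5) this matters concretely: the punctures are boundary components of $F$, whose number is an isotopy invariant, so no isotopy ``cancels'' them---one must instead exhibit a decrease in saddles or curves, which your sketch does not supply. Finally, for (6) you acknowledge that the existence of the embedded disk with boundary on $F$ meeting $L$ exactly once in the separating edge requires ``a short case analysis''; that verification is precisely the content of the condition and is the missing heart of the argument, not a routine check. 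To complete the proof you would need either to reproduce Menasco's actual reductions from \hyperref[B8]{[8]}, or, for each of (4)--(6), to construct a compressing disk whose boundary genuinely lies on $F$ (or an explicit saddle/curve-reducing isotopy guided by an innermost violating configuration).
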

See \hyperref[B8]{[8]} Lemma 3 for proof, as these properties are independent of the alternatingness of $L$. We say a surface with meridonal boundaries is in \emph{normal position} when it satisfies all (1)-(6) of the above conditions.\\ \\

\subsection{Motivating examples}

\begin{figure}[h]
\begin{subfigure}{.49\textwidth}
  \centering
  \includegraphics[width=.9\linewidth]{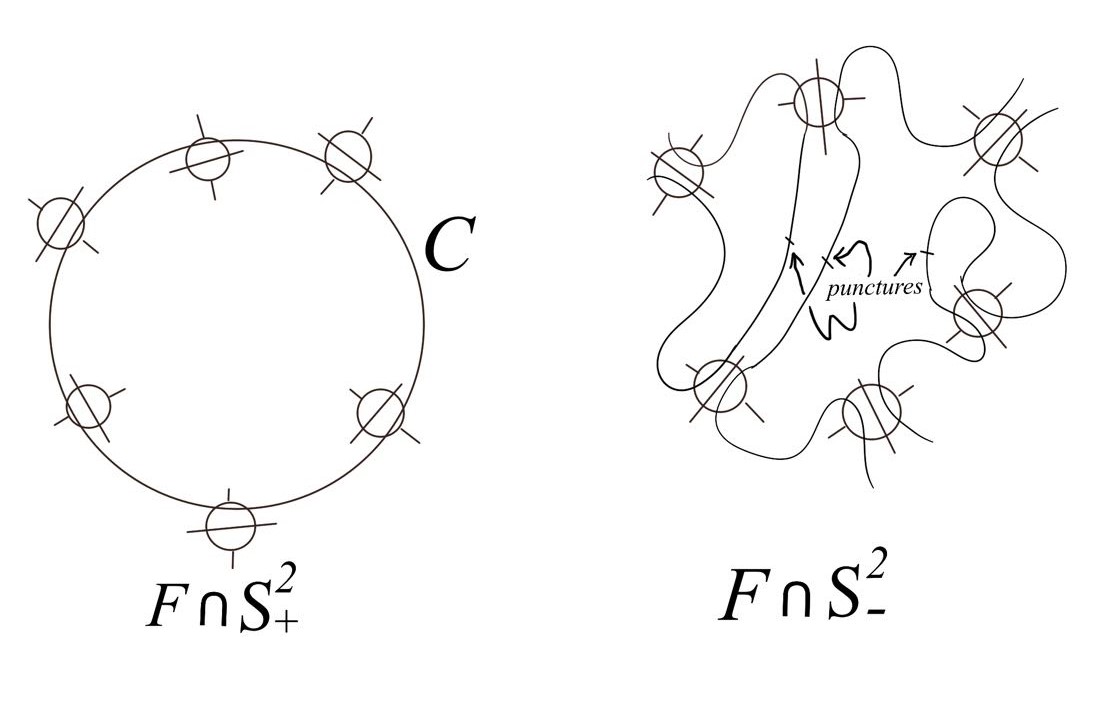}
  \caption{}
  \label{ex1a}
\end{subfigure}
\begin{subfigure}{.49\textwidth}
  \centering
  \includegraphics[width=.9\linewidth]{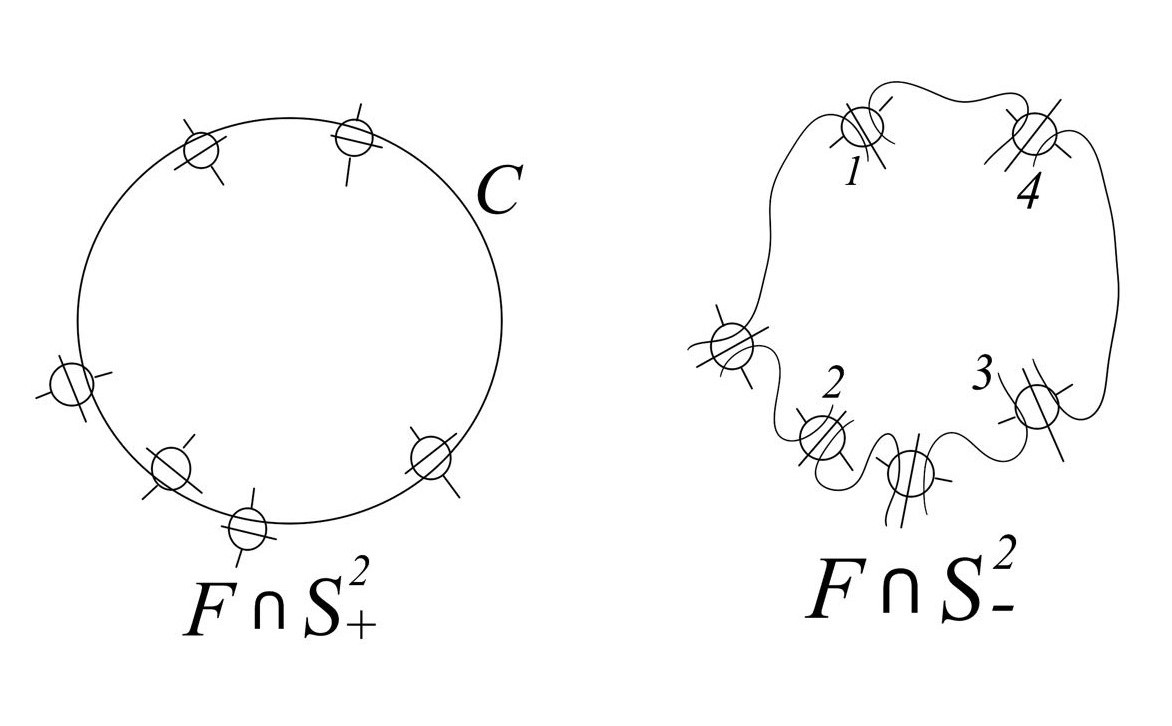}
  \caption{}
  \label{ex1b}
\end{subfigure}
\caption[Motivating example 1.]{}
\label{ex1}

\end{figure}

Let $F\subset S^{3}-L$ be a surface that is closed or with meridional boundaries. Suppose $F$ satisfies condition (1), (3), (4), (5), (6) of normal position, and $C \subset F \cap S^{2}_{\pm}$. We show a few examples which will explain the motive for some technical definitions in next section.

\begin{example}
Fig. \ref{ex1a} shows the situation $C$ intersects with saddles alternatingly. Assume $F$ is punctured, no matter how we ``connect'' the saddles on the same side of $C$ in $F\cap S^{2}_{+}$ through disk(s), on $F\cap S^{2}_{-}$ there is always a curve going through both sides of a bubble, contradicting the pairwise incompressibility of $F$ (see \hyperref[B1]{[1]}, the proof of Lemma 1). \par In Fig. \ref{ex1b}, $F$ is closed and $C$ no longer intersects saddles alternatingly. However, as one tries to connect the 1st arc end with the 2nd, 3rd, or 4th arc end, the resolving diagram represents the result of different ``connection of saddles'' through disk in $F\cap S^{2}_{+}$, and the resolving surface always contains a meridian curve.\par

\end{example}
 A natural question raised here is: How far away the link diagram is from alternating can we put a closed essential surface in the link complement? In order to answer this question, we will define such ``connection of saddles'' in the next section (see Definition  \ref{connection}).\par

\begin{example}
Fig. \ref{ex2} shows two examples when multiple saddles exist in a single bubble. In Fig. \ref{ex2a}, $F\cap S^{2}_{-}$, an arc goes through the same side of a bubble twice; While in Fig. \ref{ex2b} the surface $F$ is placed in normal position.\par

\end{example}

\begin{figure}[h]
\begin{subfigure}{.49\textwidth}
  \centering
  \includegraphics[width=.9\linewidth]{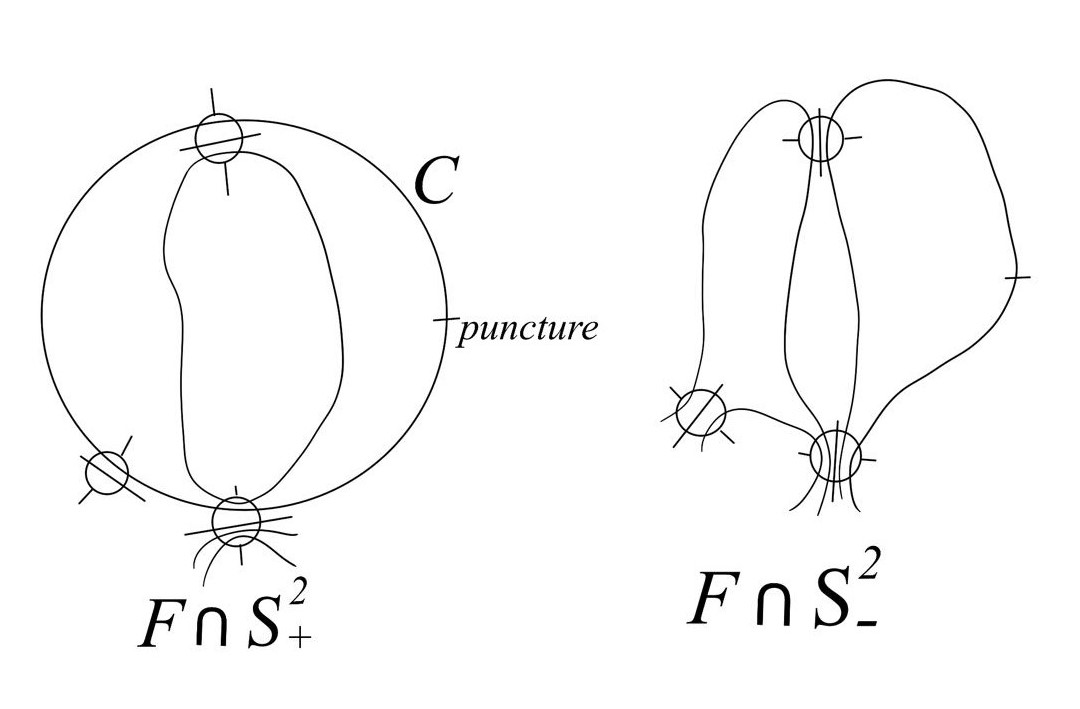}
  \caption{}
  \label{ex2a}
\end{subfigure}
\begin{subfigure}{.49\textwidth}
  \centering
  \includegraphics[width=.9\linewidth]{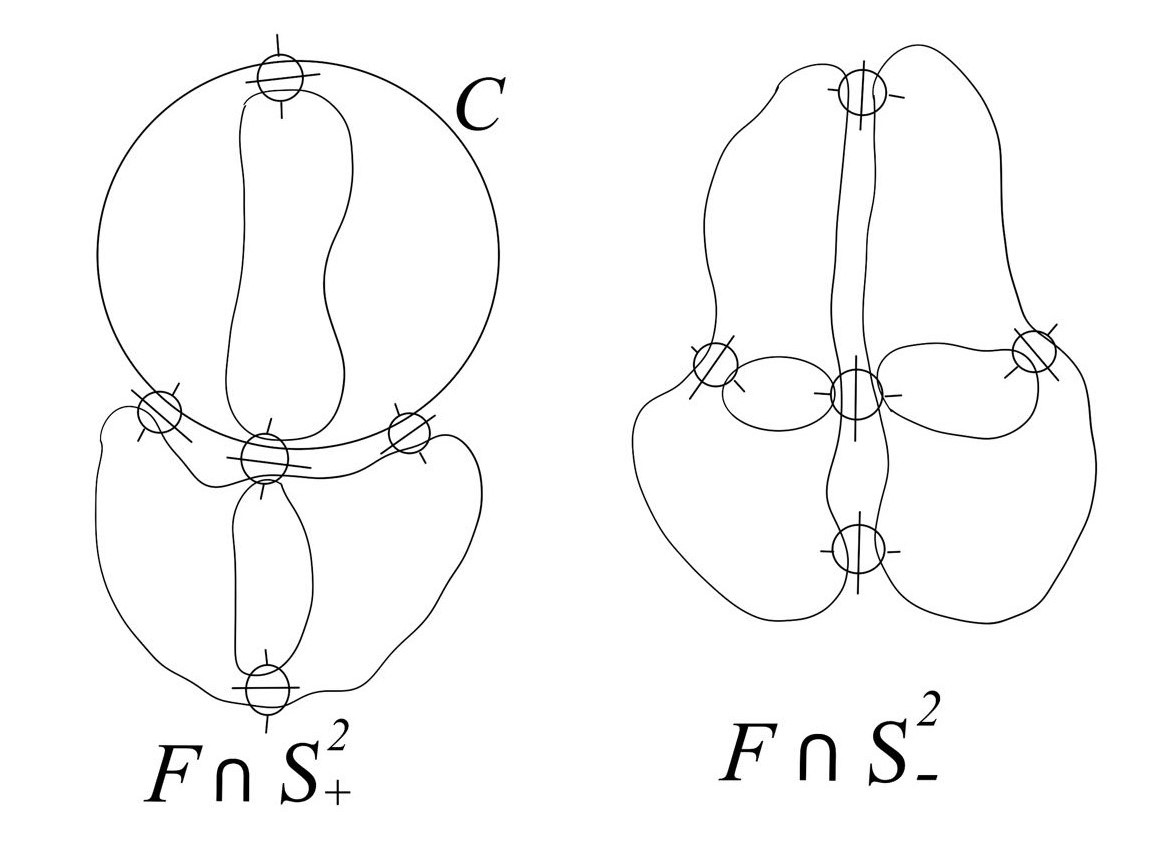}
  \caption{}
  \label{ex2b}
\end{subfigure}
\caption[Motivating example 2.]{}
\label{ex2}

\end{figure}

In order to illustrate how certain arrangement of saddles can lead to the difference between Fig. \ref{ex2a} and Fig. \ref{ex2b}, we give Definition  \ref{bubble} to keep record of the number of multiple saddles in a single bubble.\\ \\

\section{Word Reduction and Virtual Diagram}\label{C3}

\subsection{Cyclic Word and Word Reduction of Paired Up Saddles}
\theoremstyle{definition}
\begin{definition}[$\omega$-reducible]\label{def2}

We say a cyclic word is \emph{$\omega$-reducible} if it can be reduced to $\emptyset$ after we perform finitely many times of the following operations:\par
$($\RN{1}$)$ $R^{2i}\to \emptyset$, \par
$($\RN{2}$)$ $S^{i}RS^{i}\to R$,  \par
$($\RN{3}$)$ $(SR)^{2i}S\to R$,\par
$($\RN{4}$)$ $P^{i} \to \emptyset$,\par

where $i$ stands for a positive integer, $S^{i}$ means $i$ saddles in a sequence, and similarly for $R^{i}$, $P^{i}$. If a cyclic word is not $\omega$-reducible, we call it \emph{$\omega$-irreducible}.\par
\end{definition}

\begin{remark}

We call the above operations \emph{$\omega$-reductions} or \emph{reductions}. An $\omega$-reduction is a modification of the cyclic word. A cyclic word without $P$ must be even length, a $($\RN{1}$)$, $($\RN{2}$)$, or $($\RN{3}$)$ reduction does not change the parity of word length.\par
\end{remark}
We focus on closed surface and cyclic word with no puncture for convenience. The results of the case with punctures are going to be similar due to reduction $($\RN{4}$)$. Therefore we assume $F \subset S^{3}-L$ is a sphere, or a closed essential surface in normal position, $C \subset F \cap S^{2}_{+}$ (similarly we can give definitions for $C \subset F \cap S^{2}_{-}$). Then apparently $\omega(C)$ does not contain any $P$'s. With these assumptions, we give the following two technical definitions:\par

\theoremstyle{definition}
\begin{definition}[partial word]

A \emph{partial word} $\omega(C')$ is a record of an arc $C' \subset C$ that is a union of type $I$ or type $II$ arc(s), in order, the labels of these arc(s). We say a partial word of odd length is \emph{$R$-$\omega$-reducible} if it can be $R$ after we perform finitely many times of $($\RN{1}$)$, $($\RN{2}$)$, $($\RN{3}$)$ reductions. Otherwise, we call it \emph{$R$-$\omega$-irreducible}.\par 

\end{definition}

\begin{figure}[h]
\begin{subfigure}{.49\textwidth}
  \centering
  \includegraphics[width=.9\linewidth]{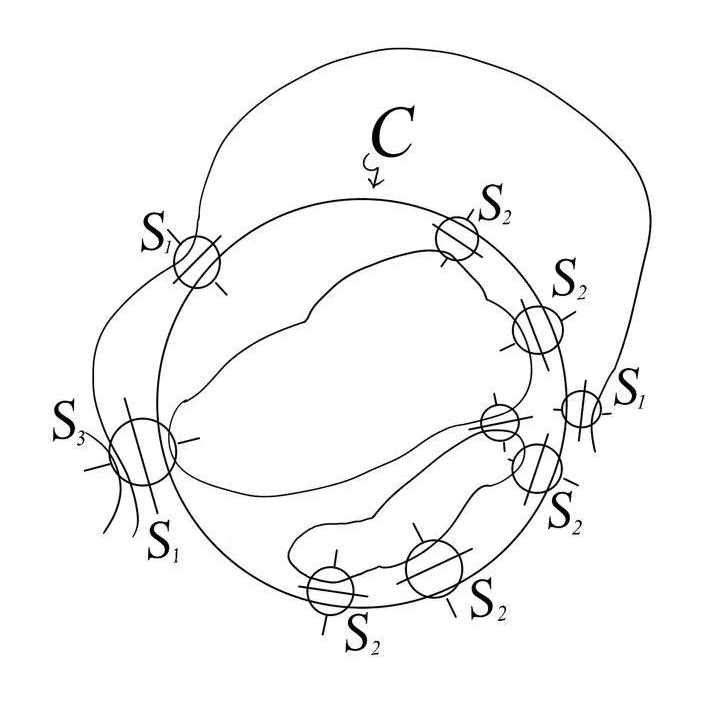}
  \caption{$F\cap S^{2}_{+}$}
 
\end{subfigure}
\begin{subfigure}{.49\textwidth}
  \centering
  \includegraphics[width=.9\linewidth]{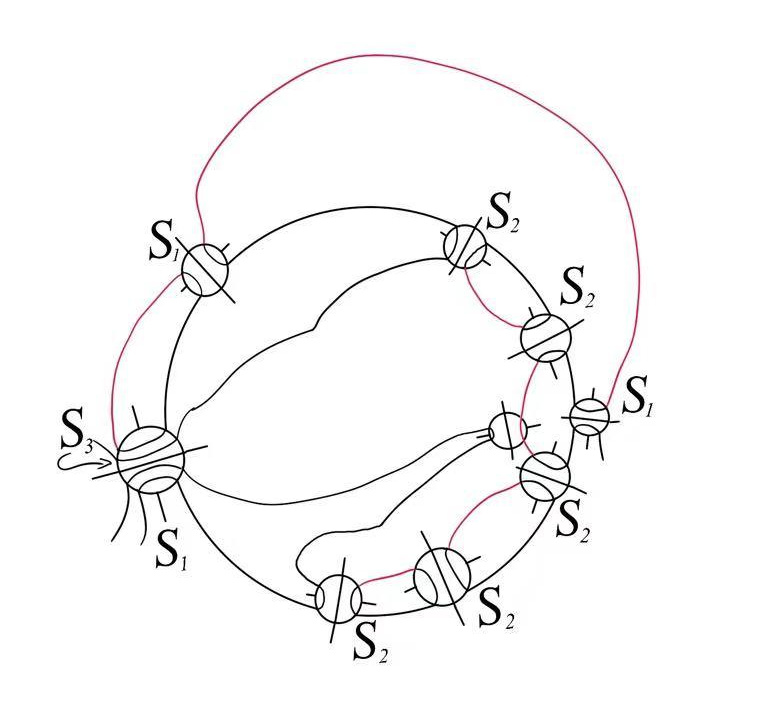}
  \caption{$F\cap S^{2}_{-}$}

\end{subfigure}
\caption[Paired up saddles.]{Three saddle $S_1$'s in the figure are paired up through a connecting operation $f_1$, similarly for the four $S_2$'s, because each red arc in (b) $F\cap S^{2}_{-}$ intersects paired up saddles at both ends. $S_3$ is not paired up with any $S_1$ or $S_2$, since $S_3$ does not intersect $C$. Any $S_1$ and $S_2$ are not paired up, since there does not exist an arc satisfying the definition. }
\label{paired}

\end{figure}

\theoremstyle{definition}
\begin{definition}[paired up saddles]\label{connection}

We denote the set of all saddles contained in the bubbles that $C$ intersects as $\Lambda_{C}$.
 Suppose two saddles that $C$ intersects both intersect with an arc $\mathscr{A} \subset F\cap S^{2}_{-}$, and $\mathscr{A} - \{C \cup \Lambda_{C} \} $ is a connected arc, then we say these saddles are assigned with a \emph{connecting operation} $f_{\lambda}$. We identify the notations of the connecting operations assigned to the same saddle, and we say a set of saddles are \emph{paired up} to each other(s), through $f_{\lambda}$, if all of them can be assigned with an $f_{\lambda}$. Moreover, we can mark the saddle $S$'s with the same subscripts of their assigned connecting operation, so they can be recorded as $S_{\lambda}$. See Fig. \ref{paired}. \par
\end{definition} 

\begin{remark}
This is well-defined since a saddle $C$ intersects can be assigned with no more than one connecting operation, after identification.\par
\end{remark}

With the above definitions, we can now show a lemma to describe the situation when each bubble that a selected loop $C$ intersects contains only one saddle. Then the purpose of rest of this section becomes transferring the other situations to the situation in the following lemma:\par

\begin{lemma} \label{lemma}
Let $L\subset S^{3}$ be a link, $F \subset S^{3}-L$ be a separating sphere or an essential surface in normal position.
If each saddle intersecting $C\subset F \cap S^{2}_{\pm}$ is paired up to some other saddle(s) intersecting $C$, then $\omega(C)$ is $\omega$-reducible. \par
\end{lemma}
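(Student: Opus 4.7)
We proceed by induction on the number $k$ of saddles that $C$ meets. The base case $k=0$ is trivial: the cyclic word $\omega(C)$ has no $S$'s, and because $C$ bounds a disk in $B^3_\pm$ by condition $(3)$ of Proposition~\ref{sp1} while $R$-labels only arise at type-$I$ arcs whose ends encounter bubbles, $\omega(C)$ collapses to $\emptyset$.

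For the inductive step, exploit the non-crossing structure of the pairings. Each pairing arc $\mathscr{A}_\lambda\subset F\cap S^2_\mp$ is disjoint from $C$ in its interior (by the paired-up definition, $\mathscr{A}_\lambda\setminus(C\cup\Lambda_C)$ is a connected arc), and embedded arcs of distinct loops of $F\cap S^2_\mp$ are pairwise disjoint in $F$; hence the partition of saddles on $C$ into pairing classes $\{f_\lambda\}$ realizes a non-crossing partition in the cyclic order along $C$. Select an \emph{innermost} pair: two saddles $S_\lambda^{(1)},S_\lambda^{(2)}$ in the same class joined by a sub-arc $\gamma\subset C$ whose interior contains no saddle belonging to a class with a saddle outside $\gamma$.

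Inside $\gamma$, every saddle is paired with another saddle of $\gamma$, so the inductive hypothesis applies to the partial word associated to $\gamma$ and yields, through the reductions of Definition~\ref{def2}, either $\emptyset$ or $R$ (the outcome being controlled by the parity of the internal saddle count and by the side-structure of $C$). The pairing arc $\mathscr{A}_\lambda$ itself provides the decisive geometric input: because it lies on the opposite sphere and is disjoint from $C$ in its interior, the two bubbles of $S_\lambda^{(1)}$ and $S_\lambda^{(2)}$ lie on the same side of $C$ in $S^2_\mp$, which produces the $R$-labels required to fire reduction (II) or (III) on the block $S\,\omega(\gamma)\,S$. That block then collapses to $R$, and iterating the same construction on the now strictly shorter cyclic word---whose pairing condition is inherited from the original---eventually reduces $\omega(C)$ to a word consisting only of $R$'s. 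A final parity argument, using that $C$ is a loop in $S^2_\pm$ and that ``same-side'' type-$I$ arcs must be globally matched, shows this terminal count of $R$'s is even and kills the word via reductions of type (I).

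The main obstacle will be the innermost selection when a pairing class contains three or more saddles, so that the associated sub-diagram inside the disk of $S^2_\mp$ is a tree rather than a single chord. In that situation, one must select a leaf of the tree to initialize the shrinking of $\gamma$, and conditions $(1)$ and $(2)$ of Proposition~\ref{sp1}---every loop has a nonempty word and no loop meets a single bubble in more than one arc---are what guarantee both that $\gamma$ is non-degenerate and that the induction closes without creating a new bubble traversal on the reduced loop.
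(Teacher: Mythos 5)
Your proposal hinges on the claim that the pairing classes form a non-crossing partition of the saddles in the cyclic order along $C$, justified only by the disjointness of the pairing arcs in $F\cap S^{2}_{\mp}$. That justification does not work: a saddle meets $S^{2}_{\mp}$ in arcs on both sides of its bubble, so two pairing arcs can lie in different complementary regions of $C$ on the opposite sphere and hence interleave along $C$ without ever intersecting each other. Handling exactly these interleaved configurations is the geometric heart of the lemma: in the paper's proof one extracts an innermost $R$-$\omega$-irreducible partial word and shows that a saddle of it which is paired only with saddles outside it forces, via the nesting of the curves of $F\cap S^{2}_{\mp}$, a loop passing through both sides of a bubble (Fig.~\ref{F2b}), contradicting condition (2) of normal position and the minimality of the number of saddles. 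Your argument never invokes this input; conditions (1)--(3) appear only in the trivial base case and in a closing remark about trees within a single pairing class, so the decisive case is unaddressed and the ``non-crossing'' step is a genuine gap, not a consequence of embeddedness.

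Two further problems occur at the reduction step. The labels $R$ record whether the two bubbles at the ends of a type $I$ arc lie on the same side of $C$ in $S^{2}_{\pm}$; they are not created by a pairing arc on the opposite sphere, so the assertion that the pairing arc ``produces the $R$-labels required to fire reduction (II) or (III)'' on the block $S\,\omega(\gamma)\,S$ is unsupported --- in the paper the $R$-$\omega$-reducibility of such inner blocks is not automatic, but is either arranged by the innermost choice (condition (\Rn{2}) there) or refuted by contradiction. Moreover, your induction is set up for closed curves but applied to sub-arcs $\gamma$: the relevant statement for $\gamma$ is $R$-$\omega$-reducibility of an odd-length partial word, which is a different assertion requiring its own formulation, and the inductive hypothesis ``every saddle met by $\gamma$ is paired with another saddle met by $\gamma$'' can simply fail, since a saddle inside $\gamma$ may be paired only with saddles outside $\gamma$ --- again precisely the interleaved situation your non-crossing claim was meant to exclude. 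The nested/innermost strategy does parallel the paper's, but as written the proposal is incomplete at the steps where the geometry of normal position must enter.
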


\begin{figure}[h]
\begin{subfigure}{.49\textwidth}
  \centering
  \includegraphics[width=.8\linewidth]{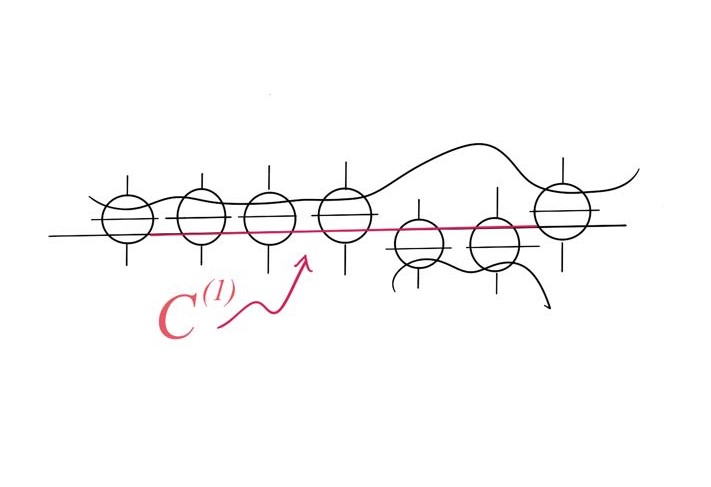}
  \caption{a R-$\omega$-reducible partial word $\omega(C^{(1)})$ reads as $RSRSRSSRS$}
  \label{F2a}
\end{subfigure}
\begin{subfigure}{.49\textwidth}
  \centering
  \includegraphics[width=.8\linewidth]{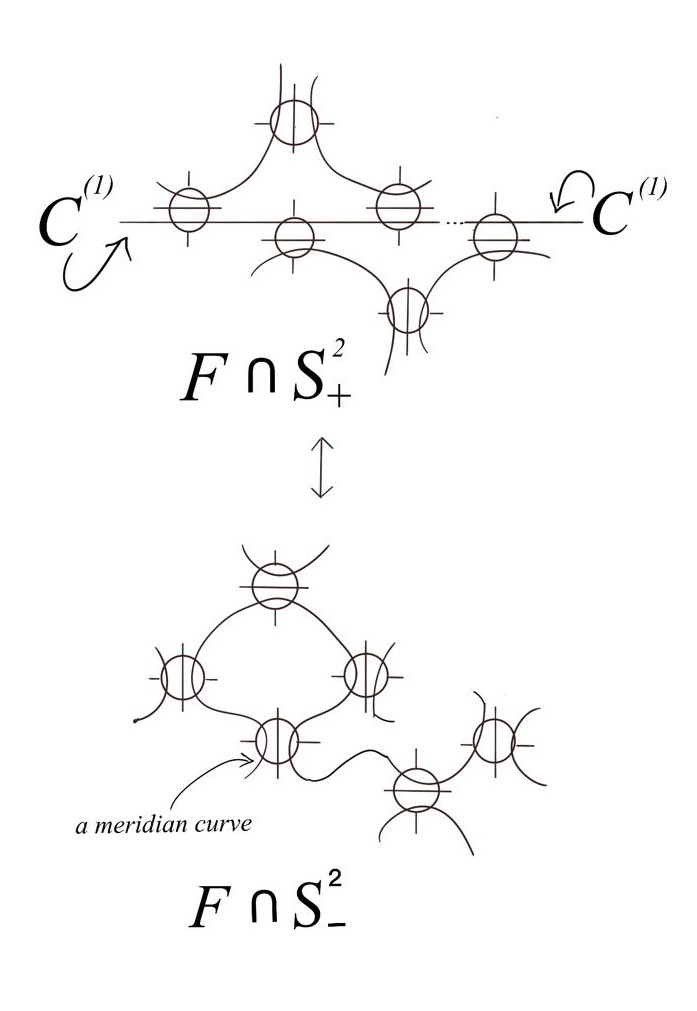}
  \caption{existence of a meridian curve}
  \label{F2b}
\end{subfigure}
\caption[Two different situations where saddles of $\omega(C^{(1)})$ are paired up with each other.]{Examples of two different situations where saddles of $\omega(C^{(1)})$ are paired up with each other.}
\label{F2}
\end{figure}

\begin{figure}[h] 
\centering
\includegraphics[width=0.6\textwidth]{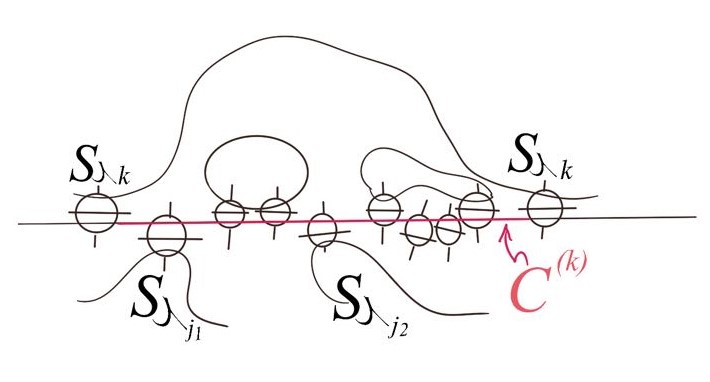}
\caption[A contradictory situation in Lemma \ref{lemma}]{A R-$\omega$-reducible partial word $\omega(C^{(k)})$ reads as $SSRSSSSRSSR$ (omitting the subscripts). $\omega(C^{(k)})$ does not contain any $S_{\lambda_{k}}$'s. $S_{\lambda_{j_1}}$, $S_{\lambda_{j_2}}$ are two saddles contained in $\omega(C^{(k)})$ that are not paired up with any saddles contained in $\omega(C^{(k)})$, and they are not recorded in the partial word between any two paired up saddles contained in $\omega(C^{(k)})$.}
\centering

\label{F3}
\end{figure}

\begin{proof}

Suppose $F \subset S^{3}-L$ is a separating sphere or a closed essential surface in normal position. Without lost of generality, we assume $C\subset F \cap S^{2}_{+}$ is a simple closed curve such that $\omega (C)$ is $\omega$-irreducible, and that each of the saddles in $\omega(C)$ is paired up to other saddle(s) in $\omega(C)$. Then each saddle can be assigned with a unique connecting operation $f_{\lambda_{i}}$, $1\leq \lambda_i \leq n$. $\omega(C)$ always consists of $S^{i}$ after we perform finitely many times of $($\RN{1}$)$, $($\RN{2}$)$, $($\RN{3}$)$ reductions since it is by assumption $\omega$-irreducible. \par

Consider an adjacent pair of saddle $S_{\lambda_{1}}$'s associated to $f_{\lambda_{1}}$ (adjacent as in one of the arcs contained in $C$ which is between this pair of saddles does not intersect with any  $S_{\lambda_{1}}$). Denote the two partial words of the arcs separated by cutting the loop $C$ at two of (the four) edge points of two type-$II$ arcs intersecting this pair of saddles as $\omega(C^{(1)})$ and $\omega(C^{(1)(1)})$, so that $\omega(C^{(1)})$ does not consist of any $S_{\lambda_{1}}$'s and $\omega(C^{(1)(1)})$ contains all the $S_{\lambda_{1}}$'s of $\omega(C)$. Note that the length of both $\omega(C^{(1)})$ and $\omega(C^{(1)(1)})$ are odd. Therefore at least one of these partial words is R-$\omega$-irreducible, otherwise the word $\omega(C)$ would be $\omega$-reducible. We can assume $\omega(C^{(1)})$ is R-$\omega$-irreducible, because otherwise we can pick a different adjacent pair of $S_{\lambda_{1}}$'s associated to $f_{\lambda_{1}}$ so that $\omega(C^{(1)})$ is R-$\omega$-irreducible and it does not consist of any $S_{\lambda_{1}}$'s. Notice if such pair of $S_{\lambda_{1}}$'s does not exists, $\omega (C)$ is again $\omega$-reducible. \par 

Not all of the saddles of $\omega(C^{1})$ are paired up with each other. Otherwise, either $\omega(C^{1})$ is R-$\omega$-reducible, since simultaneously ``deleting'' the $S$'s assigned with the same connecting operation one ``batch'' at a time will naturally form the $\omega$-reduction(s) and reduce the partial word $\omega(C^{1})$ to an odd number of $R$, see Fig. \ref{F2a}. Then we perform the $($\RN{1}$)$ reduction and we are left with only one $R$; Or $F$ is not in normal position and it contains a meridian curve, since the ``nesting behavior'' of the simple closed curves of $F\cap S^{2}_{\pm}$ will force to manifest a loop on $F\cap S^{2}_{-}$ passing through both sides of a bubble, contradicting (2) of normal position, see Fig. \ref{F2b}.\par

Now assume $\omega(C^{(k-1)})$ is $R$-$\omega$-irreducible for some $k>1$, we claim that we can find a connecting operation $f_{\lambda_{k}}$ and two paired up saddle $S_{\lambda_{k}}$'s, together with the two partial words in between two of (the four) edge points of two type-$II$ arcs, $\omega(C^{(k)})$ and $\omega(C^{(k)(k)})$, $C= C^{(k)}\cup C^{(k)(k)}$, so that:\par

($\Rn{1}$) $\omega(C^{(k-1)})$ consists of $\omega(C^{(k)})$, and $\omega(C^{(k)})$ is $R$-$\omega$-irreducible, $\omega(C^{(k)})$ does not consist of any $S_{\lambda_{k}}$'s; \par

($\Rn{2}$) Each partial word contained in $\omega(C^{(k)})$, that is also in between a pair of adjacent paired up saddles of $\omega(C^{(k)})$, is $R$-$\omega$-reducible.\par

($\Rn{3}$) $\omega(C^{(k)})$ contains saddle(s) that are not paired up with any saddles contained in $\omega(C^{k})$. \par

Because the partial word $\omega(C^{(k)})$ satisfying ($\Rn{1}$) is guaranteed to exist, otherwise we can make statements similar to the above paragraphs to show a contradiction that implies $\omega(C^{(k-1)})$ being $R$-$\omega$-reducible. We can show ($\Rn{3}$) is true if ($\Rn{1}$) is true, for similar reasons in the above arguments. We now claim that ($\Rn{2}$) is also true for some large enough integer. Because if ($\Rn{2}$) is not true for the integer $k$, we can take an adjacent paired up saddles $S_{\lambda_{k+1}}$'s contained in $\omega(C^{(k)})$, associated to some connecting operation $f_{\lambda_{k+1}}$, such that the partial word $\omega(C^{(k+1)})$ contained in $\omega(C^{(k)})$, and in between this pair of saddles, is $R$-$\omega$-irreducible. In other words, if $\omega(C^{(k)})$ does not satisfy ($\Rn{2}$), we can always find the ``inner'' partial word $\omega(C^{(k+1)})$ contained in $\omega(C^{(k)})$ that is $R$-$\omega$-irreducible, and does not consist of any $S_{\lambda_{k+1}}$'s. Again $\omega(C^{(k+1)})$ satisfies ($\Rn{1}$), ($\Rn{3}$), with the subscripts in the two conditions modified. Eventually, by finiteness of the word length, we can always find an ``innermost'' partial word satisfying condition ($\Rn{1}$), ($\Rn{2}$) and ($\Rn{3}$) (together with a ``second innermost'' partial word, whose subscript is 1 smaller than the ``innermost'' one). Note that we just need to modify the subscripts to make the final claim.\par

Let $\{S_{\lambda_j}\}$ be all the saddle(s) contained in $\omega(C^{k})$ that are not paired up with any saddles contained in $\omega(C^{k})$, and each saddle of $\{S_{\lambda_j}\}$ is not recorded in the partial word between any two paired up saddles contained in $\omega(C^{k})$, see Fig. \ref{F3}. Notice such saddle(s) exist, otherwise we have a contradiction by previous arguments.\par

Suppose the cardinality of $\{S_{\lambda_j}\}$ is larger or equal to two. According to ($\Rn{2}$), we can first perform $($\RN{2}$)$, $($\RN{3}$)$ reductions on $S$'s which correspond to saddles paired up within $\omega(C^{k})$ and the $R$-$\omega$-reducible partial words in between, then perform $($\RN{2}$)$, $($\RN{3}$)$ reduction(s) on the saddles of $\{S_{\lambda_j}\}$ and we are left with an odd number of $R$'s, see Fig. \ref{F3}, then we perform the $($\RN{1}$)$ reduction and we are left with only one $R$, contradicting $\omega(C^{k})$ being $R$-$\omega$-irreducible; \par

Suppose the cardinality of $\{S_{\lambda_j}\}$ is one, the resulting curves on $F \cap S^{2}_{-}$ are similar to Fig. \ref{F2b}, contradicting (2) of normal position. 

For a surface with meridional boundaries (punctures) that is placed in normal position, the idea and the technical definitions needed are almost identical to the above proof.\\

\end{proof}

\subsection{Virtual Word and Virtual Diagram}

The rest of definitions in this section are given in order to convert the word problem of multiple saddles contained in a single bubble to the word problem on which every bubble a selected loop $C$ intersects contains only one saddle, so that Lemma \ref{lemma} would apply. The purpose is to obtain a bookkeeping diagram, in which we obtain the \emph{virtual word} $\omega^v(C)$ by recording labels of the \emph{virtual bubbles} $C$ intersects and the labels on type $I$ arcs of $C$. These definitions are unrelated to the concept of virtual crossing. \par

Assume $F\subset S^{3}-L$ is a connected surface that satisfies all the conditions of normal position, with the possible exception of (2).\par

\theoremstyle{definition}
\begin{definition}[virtual bubble] \label{bubble}

Let $C\subset F\cap S^{2}_{+}$ be a simple closed curve. Then from $B^{3}_{+}$'s side of view, the \emph{virtual bubbles} are a bunch of bubbles intersecting $C$ obtained by replacing a bubble $C$ intersects that contains $1+l$ saddles with a set of $1+2l$ bubbles. Here $l \geq 0$, $C$ intersects the saddle $\overset{_l}{S}$ with $l$ saddles underneath it, and $\overset{_l}{S}$ is not necessarily the top most saddle in the bubble. These virtual bubbles intersect $C$ alternatingly in the following way: see Fig. \ref{F4}. \par
\begin{figure}[h]
\begin{subfigure}{.49\textwidth}
  \centering
  \includegraphics[width=.8\linewidth]{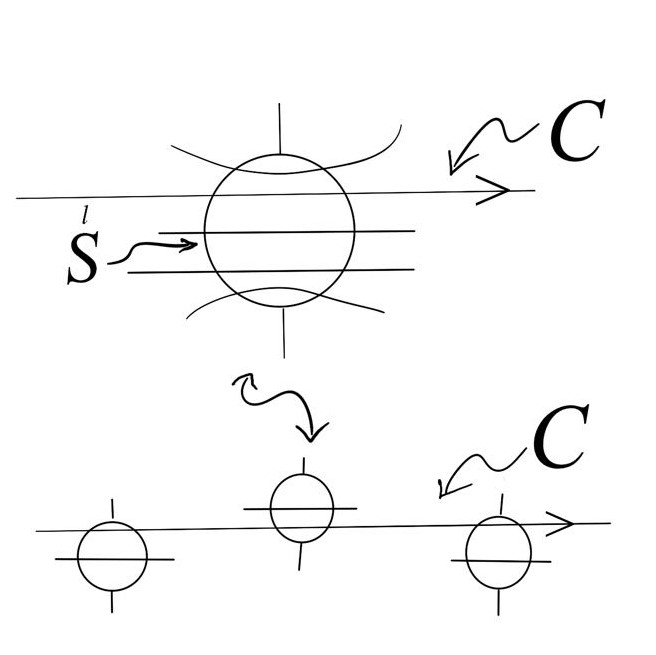}
  \caption{when $l$ is odd}
  \label{F4a}
\end{subfigure}
\begin{subfigure}{.49\textwidth}
  \centering
  \includegraphics[width=.8\linewidth]{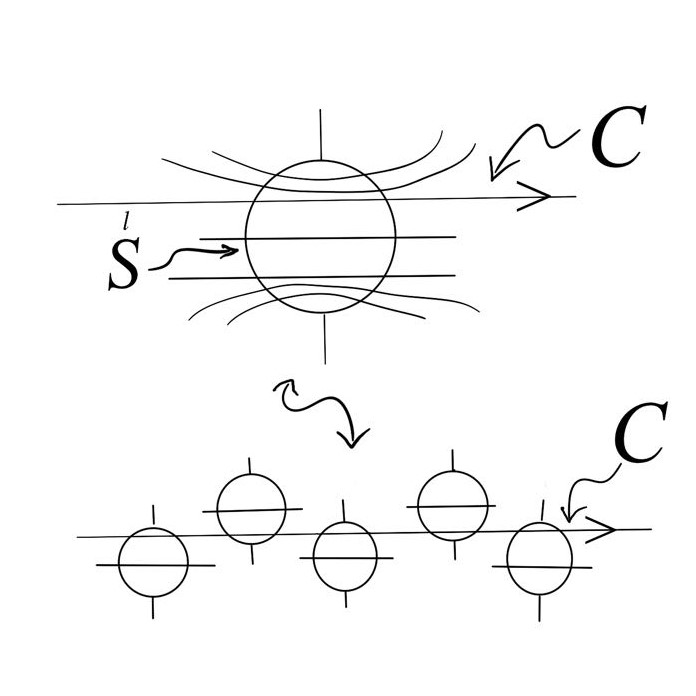}
  \caption{when $l$ is even}
  \label{F4b}
\end{subfigure}
\caption[Virtual bubbles.]{Virtual bubbles. As we perform this bubble replacement, $C$ is oriented in the same direction, and the ``outermost'' virtual bubble manifested is on the same side of $C$ as the original bubble.}
\label{F4}

\end{figure}

\end{definition}

\theoremstyle{definition}
\begin{definition}[virtual word] \label{virtual word}
With all bubbles that $C$ intersects replaced by virtual bubbles, whose corresponding type $II$ arcs are each labeled with an $S$, we obtain a \emph{virtual word} $\omega^{v}(C)$, that is a record in order the labels of $C$, in the same sense as the record of labels in a cyclic word. Notably, we also create $2l$ new type $I$ arcs, and labels of them are $\emptyset$ when we replace a bubble that contains $1+l$ saddles with virtual bubbles, while the labels of the original type $I$ arcs remain unchanged.\par

\end{definition}

In other words, we can obtain virtual words from cyclic words in the following way that is given in the definition:\par

\theoremstyle{definition}
\begin{definition}[$l$-reduction] Assume $C$ is a curve of $F\cap S^{2}_{+}$ (or $F\cap S^{2}_{-}$), $\overset{_l}{S}$ intersects $C$, and from $B^{3}_{+}$'s side of view (or from $B^{3}_{-}$'s side of view, respectively), there are totally $l$ saddles underneath it. We can replace the associated $S$ of $\overset{_l}{S}$ in the cyclic word $\omega(C)$ with the notation $\overset{_l}{S}$. We call the following operation performed on a cyclic word $\omega(C)$ an \emph{$l$-reduction}:\par
$($\RN{5}$)$ $\overset{_l}{S}\to S^{2l+1}$.\par
$l$ is a non-negative integer. The resulting word is a virtual word $\omega^{v}(C)$. \par

\end{definition}

Similar to the cyclic word, we can define $\omega$-reducible or $\omega$-irreducible virtual words.\par

\theoremstyle{definition}
\begin{definition}[$\omega$-reducible virtual word]
We say a virtual word $\omega^{v}(C)$ is \emph{$\omega$-reducible} if it is $\emptyset$ after we perform finitely many times of $($\RN{1}$)$, $($\RN{2}$)$, $($\RN{3}$)$, $($\RN{4}$)$ reductions, otherwise we call it \emph{$\omega$-irreducible}.\par
\end{definition}

\begin{figure}[h]
\begin{subfigure}{.49\textwidth}
  \centering
  \includegraphics[width=.75\linewidth]{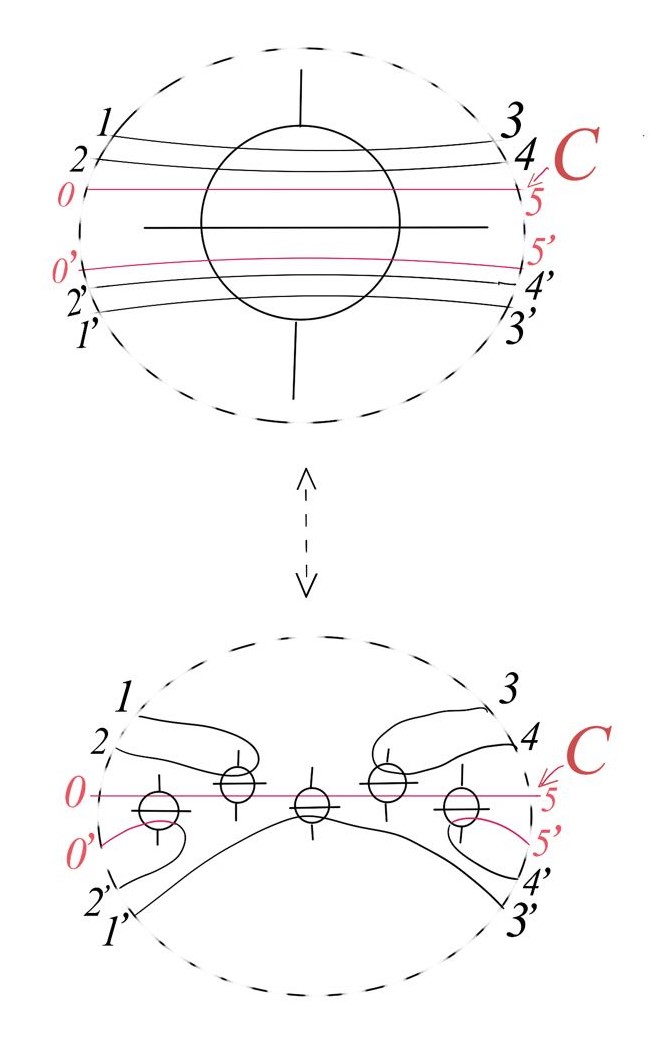}
  \caption{We replace a $(1+l)$-saddle-bubble with $2l+1$ virtual bubbles arranged alternatingly. The dotted ellipse record the related arc-ends in order.}
  \label{vb}
\end{subfigure}
\begin{subfigure}{.49\textwidth}
  \centering
  \includegraphics[width=1.1\linewidth]{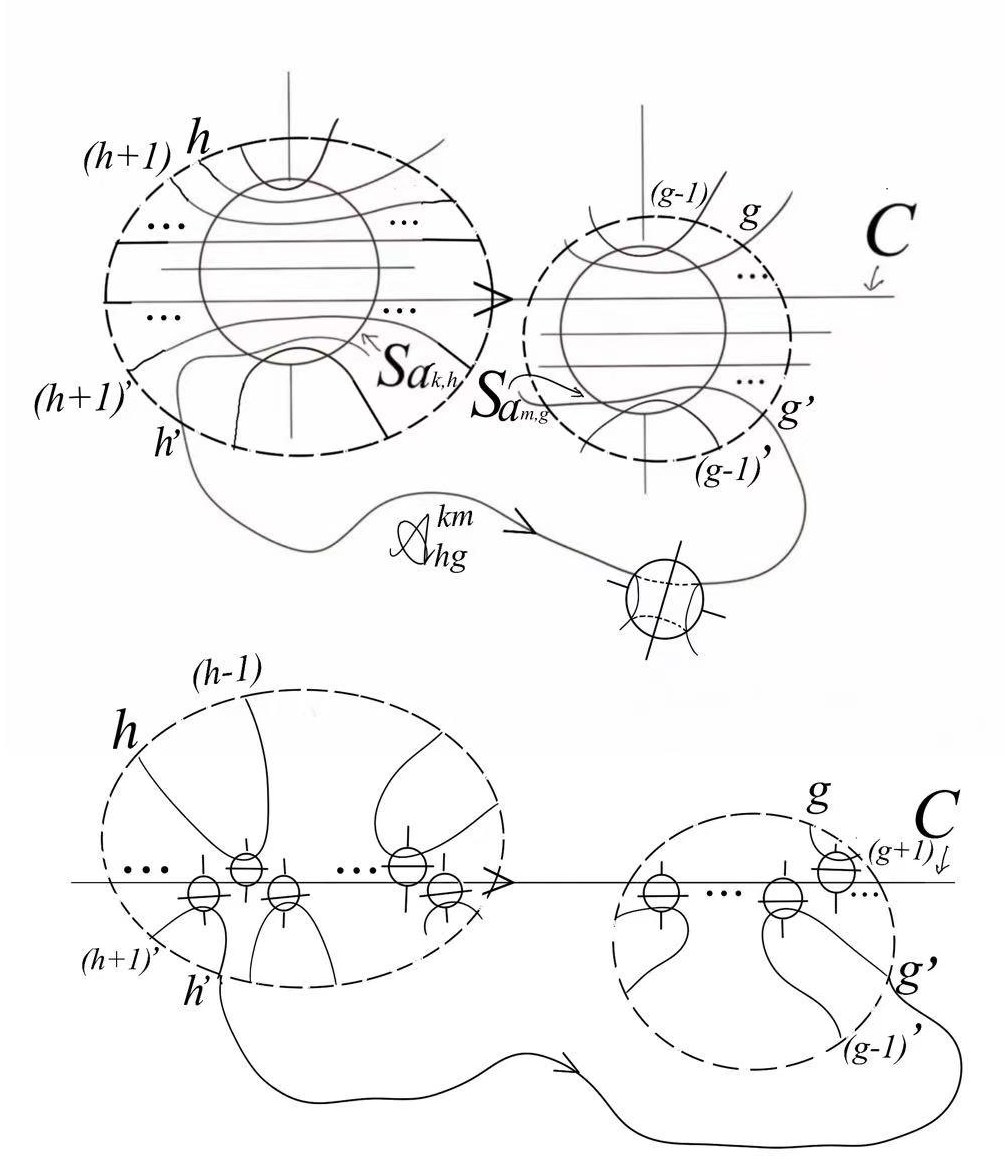}
  \caption{The arc $\mathscr{A}^{km}_{hg} \subset F\cap S^{2}_{-}$ intersects both $S_{\alpha_{k,h}}$ and  $S_{\alpha_{m,g}}$, therefore the corresponding virtual bubbles are connected. In particular we connect the arc-end marked $h'$ to the arc-end marked $g'$.}
  \label{F5}
\end{subfigure}
\caption[Virtual diagrams]{}
\label{conpound}

\end{figure}

The following definition of \emph{virtual diagram} is the construction of a bookkeeping diagram, so that it and its dual diagram would satisfy Proposition \ref{theorem2}.\par

\theoremstyle{definition}
\begin{definition}[virtual diagram]\label{virtual}
Suppose $F\subset S^{3}$ is a connected surface satisfying (1), (3), (4), (5) and (6) of normal position, $C\subset F \cap S^{2}_{+}$. A \emph{positive virtual diagram of $C$}, denoted as $F^{v+}_{C}\cap S^{2}_{+}$ (or respectively, if $C'\subset F \cap S^{2}_{-}$, a \emph{negative virtual diagram of $C'$}, $F^{v-}_{C'}\cap S^{2}_{-}$), is a diagram devoid of link and surface structure, obtained by modifying (part of) the diagram $F\cap S^{2}_{+}$ in the following way:\par

Suppose $S_{\alpha_{k,h}}$ and  $S_{\alpha_{m,g}}$ both intersect an arc $\mathscr{A}^{km}_{hg} \subset F\cap S^{2}_{-}$, then we connect the virtual bubbles manifested by the bubbles containing $S_{\alpha_{k,h}}$ and  $S_{\alpha_{m,g}}$ as shown in Fig. \ref{conpound}. We perform such operations on each virtual bubble of $C$ to obtain $F^{v+}_{C}\cap S^{2}_{+}$. \par

The dual diagram of a positive virtual diagram $F^{v+}_{C}\cap S^{2}_{+}$, is denoted as $F^{v+}_{C}\cap S^{2}_{-}$, produced in the same way that is shown in Fig. \ref{F0}.\par

\end{definition}

\begin{figure}[h]
\begin{subfigure}{.49\textwidth}
  \centering
  \includegraphics[width=.8\linewidth]{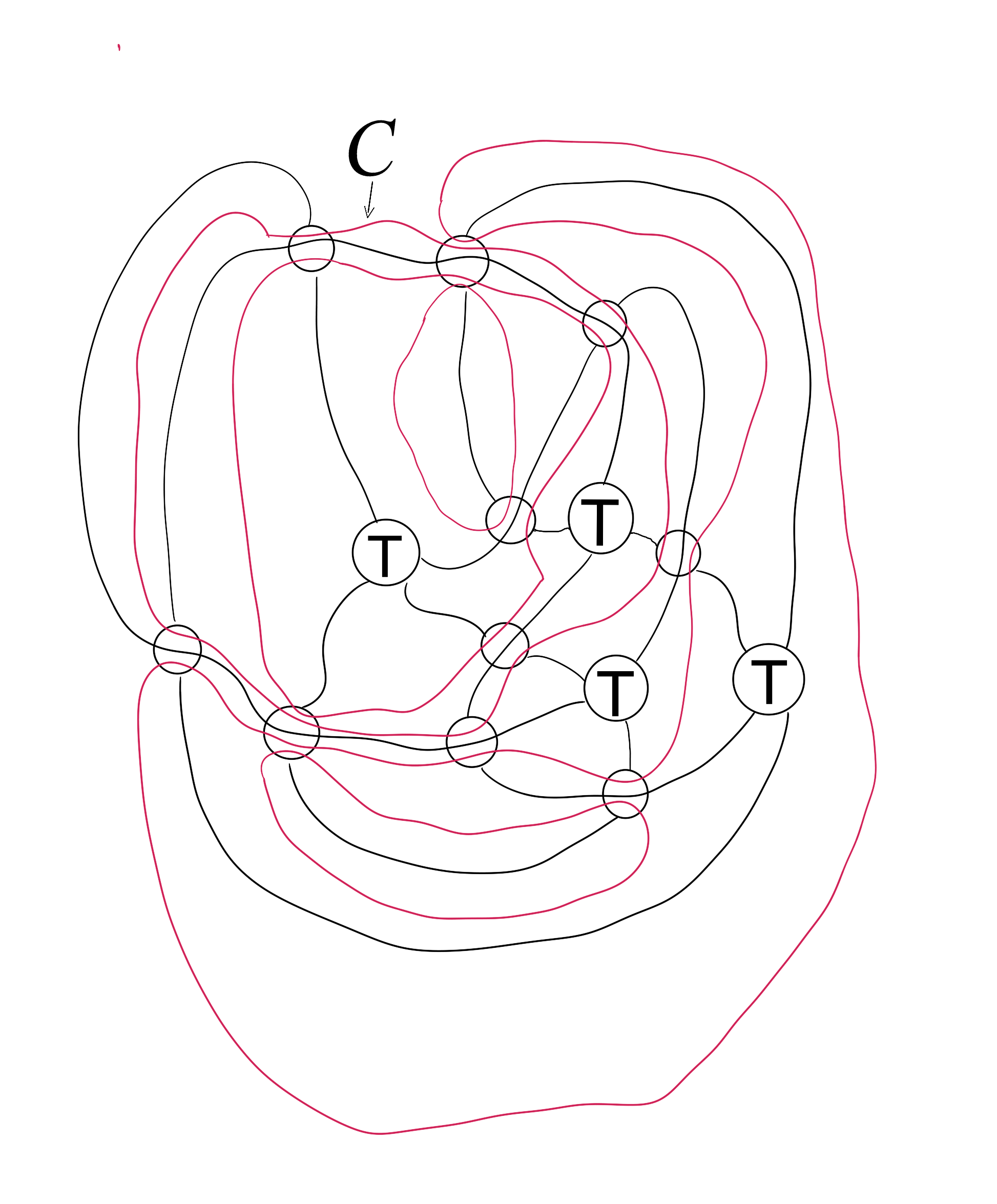}
  \caption{$F \cap S^{2}_{+}$}
  \label{F6a}
\end{subfigure}
\begin{subfigure}{.49\textwidth}
  \centering
  \includegraphics[width=.8\linewidth]{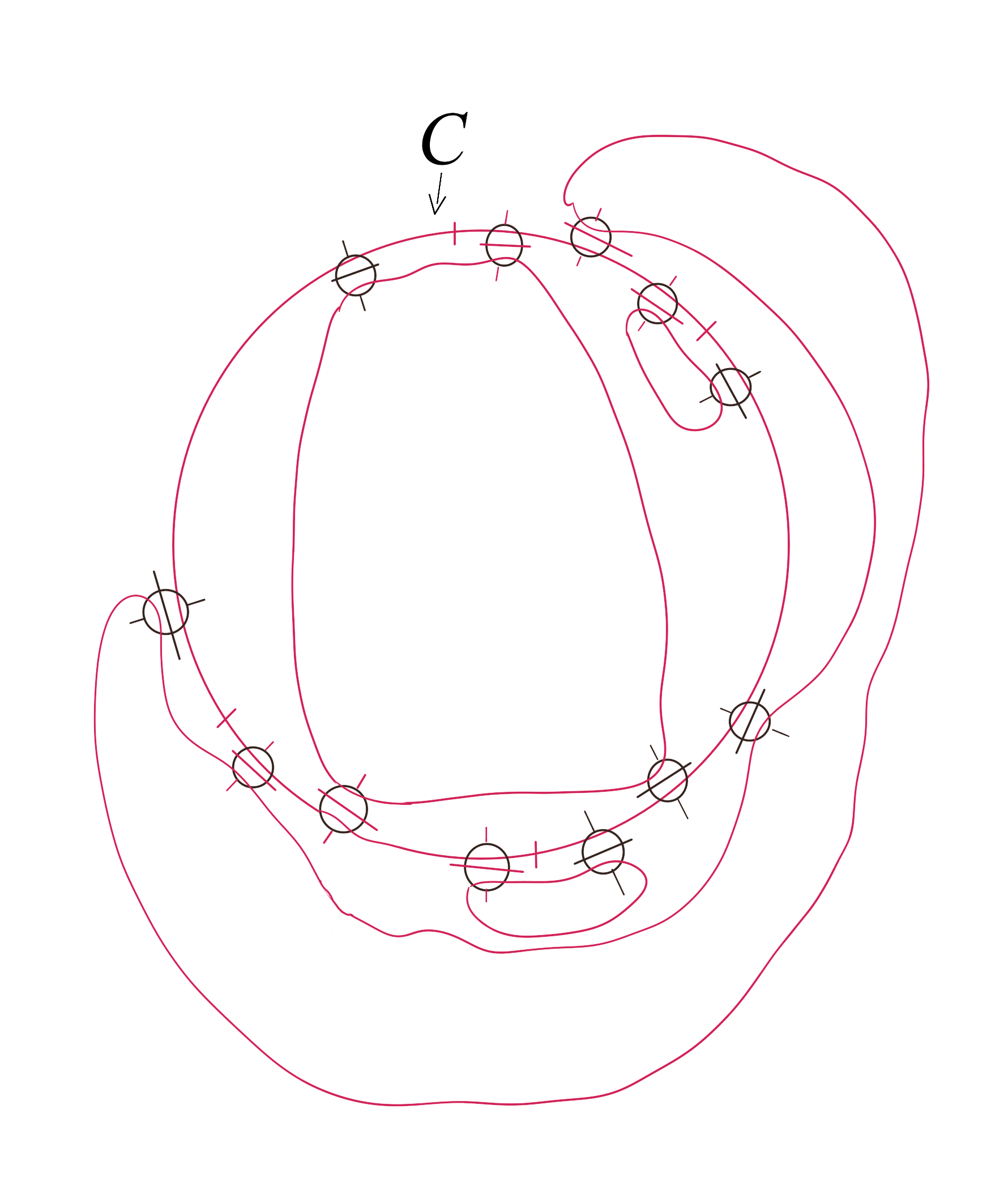}
  \caption{$F^{v+}_{C}\cap S^{2}_{+}$}
  \label{F6b}
\end{subfigure}
\caption[A diagram and a positive virtual diagram of a torus in link complement.]{(a) shows a torus $F$ (red colored) embedded in a link (black colored) complement from $B^{3}_{+}$'s side of view. (b) is the positive virtual diagram of $C$, in which the 4 red slashes on $C$ separate 2 arcs each intersecting 3 virtual bubbles, which are manifested from the the bubble containing multiple saddles. These virtual bubbles are marked with red ``link strands''.}
\label{F6}

\end{figure}

The following proposition transfers the word problem of multiple-saddle case to the case where each bubble a selected loop $C$ crosses contains only one saddle, so that Lemma \ref{lemma} would apply for the corresponding virtual word:\par

\begin{proposition} \label{theorem2}
Suppose $F\subset S^{3}$ is a connected surface satisfying condition (1), (3), (4), (5), (6) of normal position, $C \subset F \cap S^{2}_{+}$. Then if in $F^{v+}_{C}\cap S^{2}_{-}$, there exists a loop going through a virtual bubble manifested from a bubble $B$ twice, or two virtual saddles manifested from $B$ are paired up, then there exists a loop in the diagram of $F\cap S^{2}_{-}$ that goes through $B$ twice. \par

\end{proposition}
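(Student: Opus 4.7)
The plan is to exhibit a natural \emph{collapse map} $\pi_{B}$ that reverses the virtual-bubble replacement local to a single bubble $B$, and then to transport the hypothesized pathology in $F^{v+}_{C}\cap S^{2}_{-}$ back to an actual pathology in $F\cap S^{2}_{-}$. First I would spell out that the modification from $F\cap S^{2}_{+}$ to $F^{v+}_{C}\cap S^{2}_{+}$ is entirely local to each bubble $B$ intersected by $C$, replacing $B$ (containing $1+l$ saddles) by $1+2l$ virtual bubbles arranged alternatingly along $C$ as in Fig.~\ref{F4}. The dual operation producing $F^{v+}_{C}\cap S^{2}_{-}$ from the negative sheet is equally local, so away from the $2l$ new type-$I$ arcs introduced on $C$ the diagrams $F^{v+}_{C}\cap S^{2}_{-}$ and $F\cap S^{2}_{-}$ agree. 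This yields a well-defined continuous surjection $\pi_{B}$ identifying all $1+2l$ virtual bubbles manifested from $B$ with $B$ itself.

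For the two hypothesis cases, in case (a) let $\gamma\subset F^{v+}_{C}\cap S^{2}_{-}$ be a loop passing through a virtual bubble $V$ manifested from $B$ twice. Since $V$ sits at the physical location of $B$, applying $\pi_{B}$ carries $\gamma$ to a closed curve in $F\cap S^{2}_{-}$ that crosses $B$ at (at least) two distinct arcs, namely the images of the two passes through $V$. In case (b), the pairing-up of virtual saddles $V_{1}, V_{2}$ manifested from $B$ provides, via Definition \ref{connection}, an arc $\mathscr{A}\subset F^{v+}_{C}\cap S^{2}_{-}$ whose interior $\mathscr{A}\setminus\{C\cup\Lambda_{C}\}$ is connected and whose two endpoints meet $V_{1}$ and $V_{2}$. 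Applying $\pi_{B}$, the image $\pi_{B}(\mathscr{A})$ is an arc in $F\cap S^{2}_{-}$ both of whose endpoints lie on $B$, so the loop component of $F\cap S^{2}_{-}$ containing $\pi_{B}(\mathscr{A})$ is forced to meet $B$ at (at least) two distinct arc-crossings.

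The hard part will be nailing down the bookkeeping so that $\pi_{B}$ is genuinely well-defined on loops and arcs. Concretely, one must verify that the alternating arrangement of virtual bubbles specified in Definition \ref{bubble} records precisely which side of $C$ each virtual bubble occupies, and that the $2l$ new type-$I$ arcs of $C$ introduced by the replacement are matched correctly with the extra saddle-arcs at $B$ seen from below. The normal-position conditions (1), (3), (4), (5), (6) are needed to rule out extraneous identifications of arcs or self-intersections under $\pi_{B}$. Once this local matching is established, both hypothesis cases immediately produce the desired loop in $F\cap S^{2}_{-}$ passing through $B$ twice.
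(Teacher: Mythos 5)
Your proposal is correct and is essentially the paper's own argument: the paper proves this proposition simply by appealing to the construction of the virtual diagrams (Definitions \ref{bubble} and \ref{virtual}, illustrated by Example \ref{torusexample}), and your collapse map $\pi_{B}$ is just an explicit formalization of that construction-based reasoning, with the two hypothesis cases pulled back to $F\cap S^{2}_{-}$ exactly as intended. In fact your write-up supplies more detail (locality of the replacement, treatment of the two cases) than the paper's two-sentence proof does.
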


\begin{proof}

This is by construction of the virtual diagrams, see Definition  \ref{bubble}, Definition  \ref{virtual}. See also Example \ref{torusexample}.\par

\end{proof}

\subsection{An Example of Diagram, Virtual Diagram and Dual Diagrams}

\begin{example}\label{torusexample}

 Fig. \ref{F6a} shows a torus $F$ embedded in a link complement, and it is placed in normal position. We are interested in a particular s.c.c., $C \subset F \cap S^{2}_{+}$. From $F \cap S^{2}_{+}$, we can generate the positive virtual diagram of $C$, denoted as $F^{v+}_{C}\cap S^{2}_{+}$, see Fig. \ref{F6b}. The dual diagrams of $F \cap S^{2}_{+}$ and $F^{v+}_{C}\cap S^{2}_{+}$ are pictured in Fig.\ref{F7}.\par
\end{example}

\begin{figure}[h]
\begin{subfigure}{.49\textwidth}
  \centering
  \includegraphics[width=.8\linewidth]{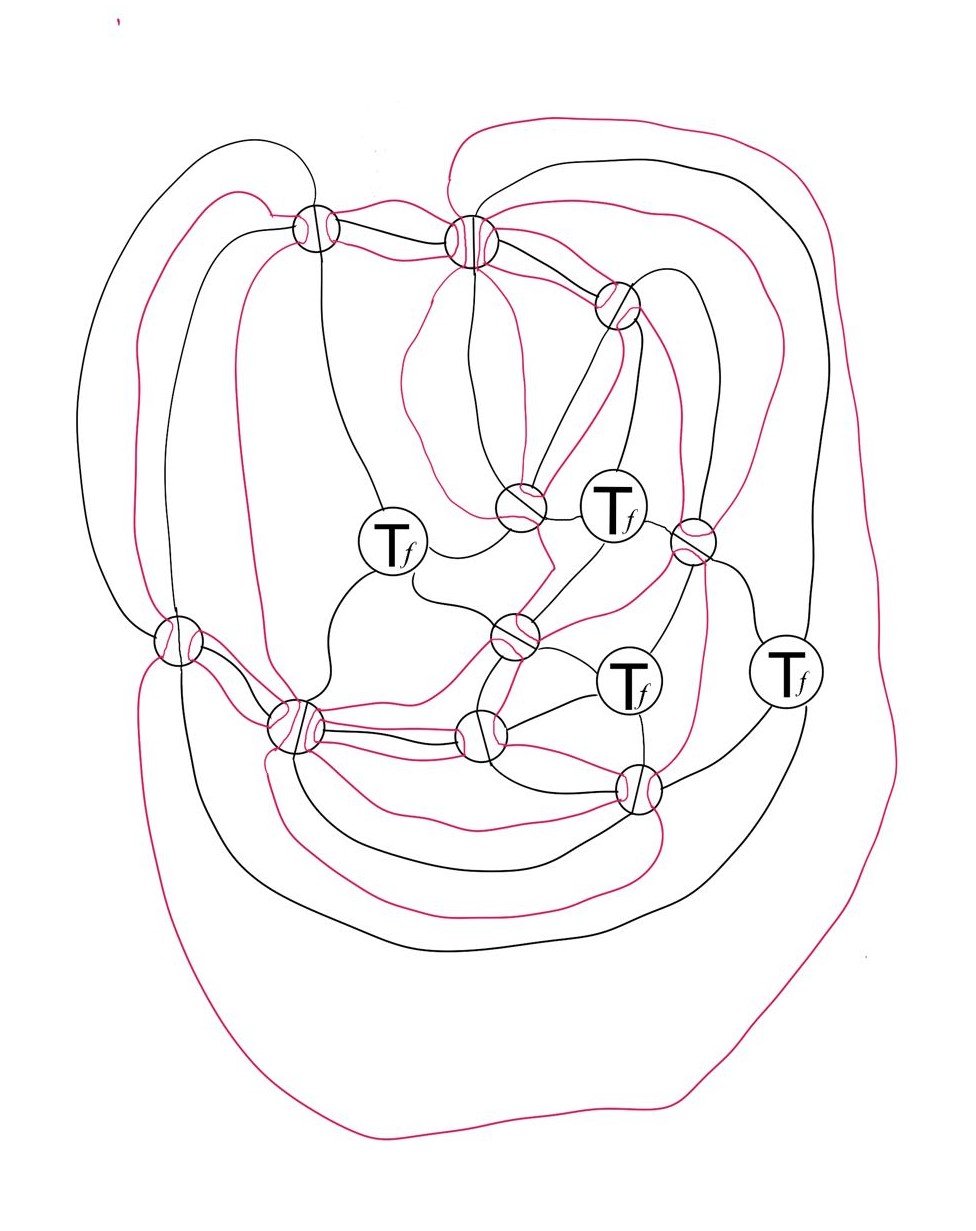}
  \caption{$F \cap S^{2}_{-}$}
  \label{F7a}
\end{subfigure}
\begin{subfigure}{.49\textwidth}
  \centering
  \includegraphics[width=.8\linewidth]{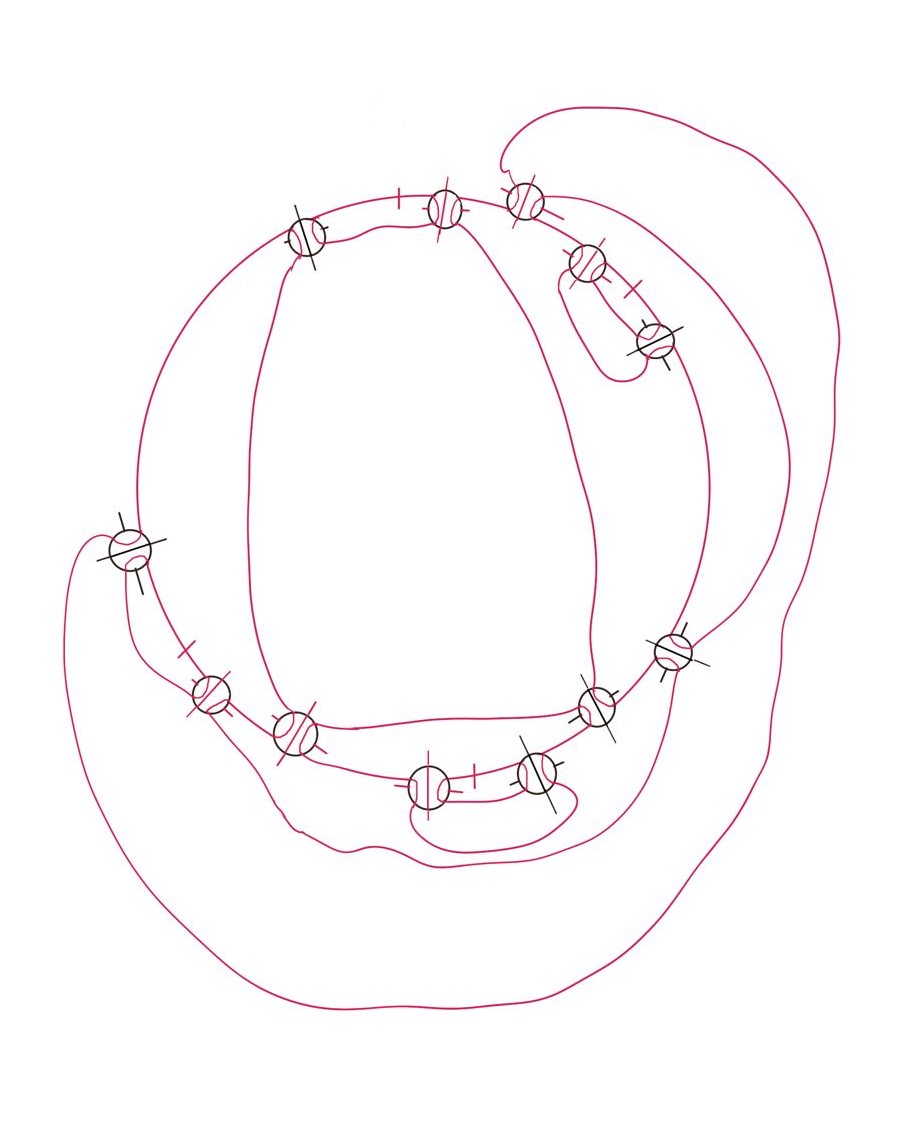}
  \caption{$F^{v+}_{C}\cap S^{2}_{-}$}
  \label{F7b}
\end{subfigure}
\caption{The dual diagrams of Figure \ref{F6}}
\label{F7}

\end{figure}

\subsection{Proof of Theorem \ref{Main}}

\begin{proof}
According to Proposition \ref{sp1} and Proposition \ref{sp2}, we can assume without loss of generality $F$ is a separating sphere or an essential surface in normal position. And suppose $\omega^{v}(C)$ is $\omega$-irreducible for some $C\subset F \cap S^{2}_{+}$, then there exists a loop in the dual virtual diagram $F^{v-}_{C}\cap S^{2}_{+}$ passing through a virtual bubble twice, according to Lemma \ref{lemma}. Then according to Proposition \ref{theorem2}, either there is a loop in $F\cap S^{2}_{-}$ passing through the same side of a bubble twice, contradicting the minimality of saddle number; Or there is a loop in $F\cap S^{2}_{-}$ passing through different sides of a bubble twice, contradicting the pairwise incompressibility of $F$.\par
\end{proof}

\section{An Upper Bound On The Euler Characteristic}\label{chapter final}

\subsection{Pullback Graph and A Complexity Measurement}

\begin{figure}[h]
\begin{subfigure}{.49\textwidth}
  \centering
  \includegraphics[width=.95\linewidth]{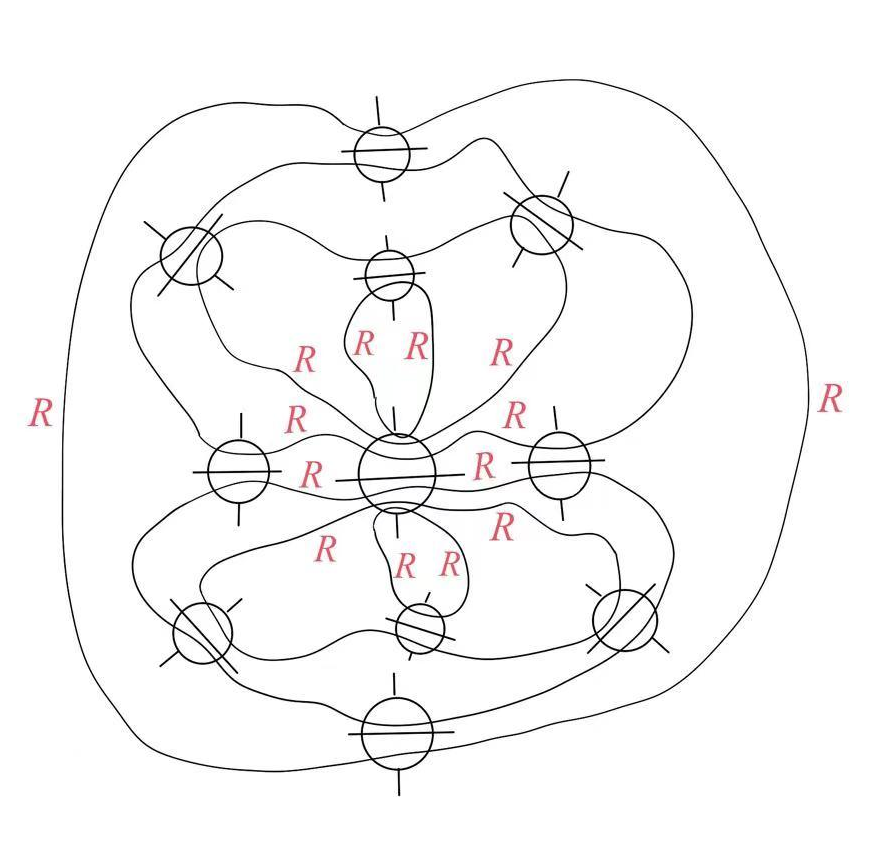}
  \caption{$F \cap S^{2}_{+}$}
  \label{flake}
\end{subfigure}
\begin{subfigure}{.49\textwidth}
  \centering
  \includegraphics[width=.95\linewidth]{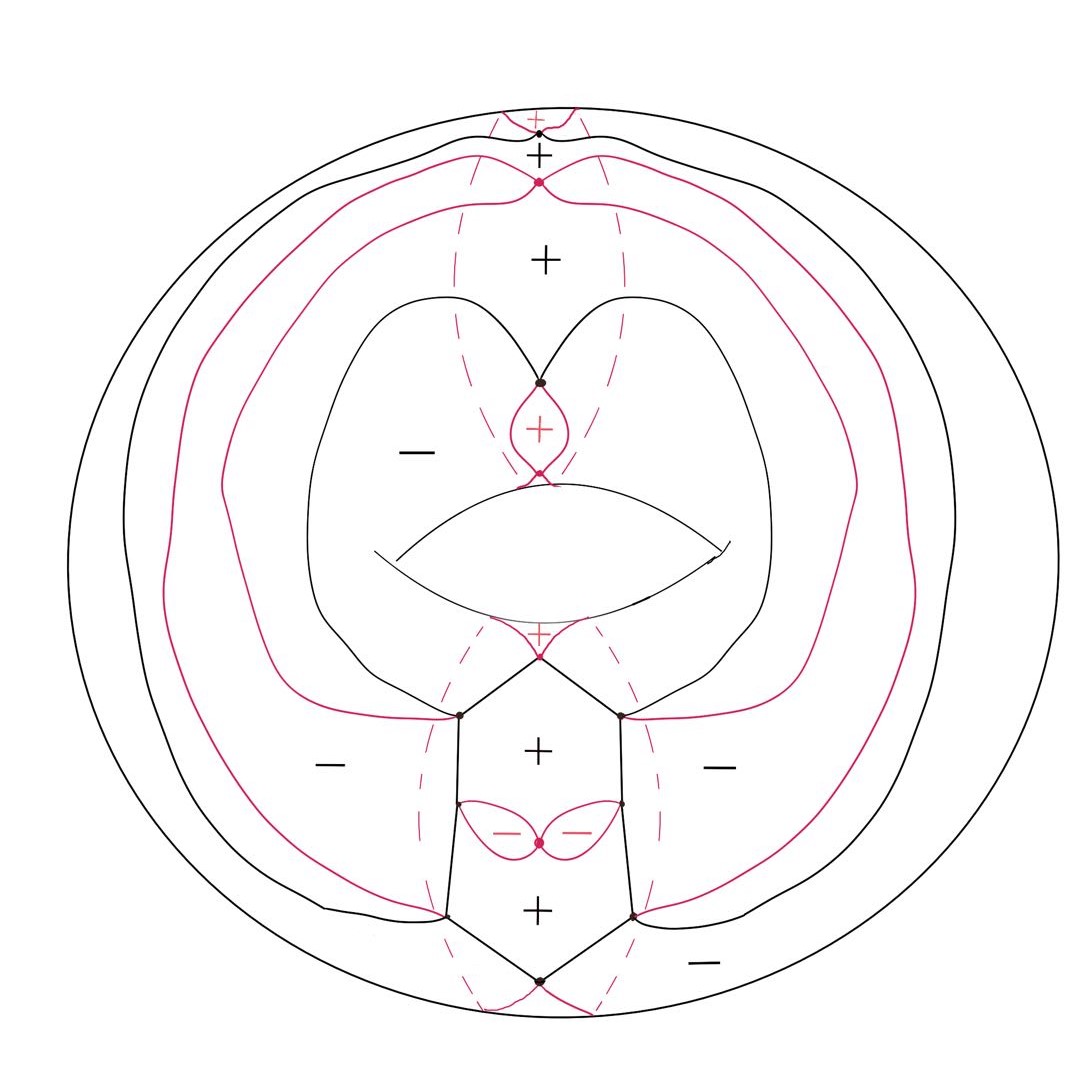}
  \caption{The pullback graph of $F$.}
  \label{pullback}
\end{subfigure}
\caption[The pullback diagram of surfaces in normal position.]{The edges marked in $R$ are colored in red in the pullback graph of $F$. Disks above $S^{2}_{+}$ are marked with $+$, while disks below above $S^{2}_{-}$ are marked with $-$.}
\label{pullback graph}

\end{figure}

Let $F$ be a separating sphere, or a closed essential surface in $S^{3}-L$ that is in normal position. Consider $F$ with all intersection curves both in $F\cap S_{+}^{2}$ and $F\cap S_{-}^{2}$ projected on it. All saddles correspond to quadrilaterals, which we collapse to vertices to obtain a 4-regular graph on $F$. $F$ is therefore separated into disk regions, and it can be checkerboarded so that for any two adjacent faces in this graph, one corresponds to a disk region which is contained strictly above $S^{2}_{+}$, and the other to a disk region contained strictly below $S^{2}_{-}$, see Fig.\ref{pullback graph}. A disk region which has $n$ vertices ($n$ must be even) will be denoted as $D_{n}$. An edge can be marked with $R$ if it corresponds to a type $I$ arc labeled with $R$ in $F\cap S_{+}^{2}$, or equivalently,  $F\cap S_{-}^{2}$. We call this graph a \emph{pullback graph} of $F$. Denote $|R|$ as the total number of $R$'s in the graph, $|S|$ as the vertex number (also the number of saddles of $F$). \par

\begin{lemma}\label{algorithm}
Let $L \subset S^{3}$ be a link, $F \subset S^{3}-L$ be a separating sphere or an essential surface in normal position, then for each simple closed curve $C \subset F \cap S^{2}_{\pm}$ that intersects with saddles, $\omega(C)$ contains at least two $R$'s.  \par

\end{lemma}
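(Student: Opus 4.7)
The plan is to derive the lemma from Theorem \ref{Main} by tracking how each of the reductions (I)--(IV) of Definition \ref{def2} affects the count of $R$ labels. First I would observe that the $l$-reduction (V) used to pass from $\omega(C)$ to $\omega^{v}(C)$ only rewrites the saddle letters $\overset{_l}{S}$ as strings $S^{2l+1}$ and leaves every type-$I$ label of $\omega(C)$ untouched. Consequently $\omega(C)$ and $\omega^{v}(C)$ carry exactly the same number of $R$'s, so it suffices to show that $\omega^{v}(C)$ contains at least two $R$'s.

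Next I would inspect each of the four $\omega$-reductions. Reduction (I) $R^{2i}\to\emptyset$ requires $2i\geq 2$ $R$'s in the word to even be applicable. Reduction (II) $S^{i}RS^{i}\to R$ leaves the total number of $R$'s unchanged. Reduction (III) $(SR)^{2i}S\to R$ also requires $2i\geq 2$ $R$'s to be applicable. Reduction (IV) $P^{i}\to\emptyset$ involves neither $R$'s nor $S$'s. Hence in any cyclic word that contains at most one $R$, rules (I) and (III) can never fire, rule (II) cannot change the $R$ count, and rule (IV) cannot change it either, so the $R$ count is frozen at its initial value throughout any admissible reduction sequence; in particular no reduction sequence can manufacture a new $R$ or extinguish a solitary one.

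Now I would argue by contradiction. Suppose $\omega(C)$ contains at most one $R$. Since $C$ intersects at least one saddle, $C$ crosses the boundary of at least one bubble and therefore contains a type-$II$ arc, so $\omega(C)$—and hence $\omega^{v}(C)$—contains at least one $S$. If $\omega^{v}(C)$ has zero $R$'s, then only rule (IV) is ever applicable, so no $S$ can ever be deleted and the word cannot reduce to $\emptyset$. If $\omega^{v}(C)$ has exactly one $R$, only rules (II) and (IV) can fire, and both preserve the single $R$, so the word retains at least one $R$ throughout and again cannot reduce to $\emptyset$. In either case $\omega^{v}(C)$ is $\omega$-irreducible, contradicting Theorem \ref{Main}. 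Therefore $\omega(C)$ must contain at least two $R$'s.

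The main obstacle is simply the combinatorial bookkeeping in the second paragraph: one has to read Definition \ref{def2} carefully enough to verify that none of (I)--(IV), interpreted as cyclic substring rewrites, can ever increase the $R$ count or remove a lone $R$. This is the only place where something could conceivably go wrong; once the invariant ``at most one $R$ in $\Rightarrow$ at most one $R$ out'' is in hand, the lemma falls out of Theorem \ref{Main} at once.
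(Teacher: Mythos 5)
Your proposal is correct, and at its core it takes the same route as the paper: deduce the lemma by contradiction from Theorem \ref{Main}, passing through the virtual word $\omega^{v}(C)$. The difference is in how the contradiction is certified, and your version is actually the more complete one. The paper's proof only treats the case of \emph{zero} $R$'s (``assume $\omega(C)$ contains only $P$'s and $S$'s''), and justifies irreducibility of the resulting virtual word by the remark that $C$ meets the virtual saddles in an alternating pattern; it never addresses the case of exactly one $R$, even though the lemma claims at least two, and that case is not excluded by parity (the number of $R$'s is only congruent mod $2$ to the number of saddles $C$ meets, which may be odd). Your bookkeeping closes exactly this gap: the $l$-reduction (V) rewrites only saddle letters and leaves type-$I$ labels alone, so $\omega(C)$ and $\omega^{v}(C)$ have the same $R$-count; none of the reductions (I)--(IV) can increase the $R$-count; (I) and (III) need at least two $R$'s to apply; and (II), (IV) preserve both a lone $R$ and the impossibility of deleting any $S$ when no $R$ is present. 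Hence a word with at least one $S$ and at most one $R$ can never reduce to $\emptyset$, which simultaneously makes the paper's ``alternating pattern'' assertion precise and handles the one-$R$ case the paper glosses over. In short: same strategy, but your explicit $R$-count invariant supplies the missing half of the paper's argument.
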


\begin{proof}
Assume $\omega(C)$ contains only $P$'s and $S$'s, then according to the construction of virtual word (see Definition  \ref{bubble}), any $\omega^{v}(C)$ consists of only $P$'s and $S$'s, i.e. $C$ intersects virtual saddles in an alternating pattern, which makes any of its virtual word $\omega$-irreducible, contradicting Theorem \ref{Main}.

\end{proof}

This means in the link complement, if we put the essential surface $F$ in normal position, then in the diagrams of $F \cap S^{2}_{\pm}$ the arrangement of saddles intersecting each simple closed curve can never be alternating. The following lemma is also true if we replace $S_{+}^{2}$  with $S_{-}^{2}$.\par 

\begin{lemma}\label{lower bound}
Let $D_{n}$ be a disk component of $F\cap B_{+}^{3}$, whose boundary $\partial D_n$ is a simple closed curve intersecting $n$ saddles, and $\partial D_n$ divides $S_{+}^{2}$ into two regions. Then from $B_{+}^{3}$'s side of view, the $n$ saddles may be included in different regions. The region including $m_{0}$ saddles ($m_{0}\leq n$) that intersect $\partial D_n$ contains loop(s) of $F\cap S_{+}^{2}$, which in total admit at least $m_{0}$ type $I$ arcs that are labeled with $R$.

\end{lemma}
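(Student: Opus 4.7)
The plan is a direct counting argument combined with a Jordan-curve nesting contradiction at interior bubbles of $R_+$; neither Theorem~\ref{Main} nor Lemma~\ref{algorithm} needs to be invoked.

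Let $R_+$ denote the region of $S^2_+\setminus\partial D_n$ containing the $m_0$ specified saddles, i.e.\ the region containing the ``other'' upper-hemisphere arcs (the type-II arcs of those $m_0$ saddles not used by $\partial D_n$). Each such other arc is a type-II arc of some loop of $F\cap S^2_+$; disjointness of the loops of $F\cap S^2_+$ forces every such loop to lie entirely inside $R_+$. Enumerate these loops as $C_1',\ldots,C_r'$, and for each $j$ let $p_j$ count how many other arcs belong to $C_j'$ and $q_j$ count how often $C_j'$ meets an interior bubble of $R_+$. Then $\sum_j p_j=m_0$, and condition~(2) of normal position forces each interior bubble of $R_+$ to be met by exactly two distinct loops (one per upper saddle arc), so $\sum_j q_j=2Q$ where $Q$ is the number of interior bubbles of $R_+$.

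I would then count $R$-arcs loop by loop. Assign each bubble met by $C_j'$ a sign $\pm$ according to the side of $C_j'\subset S^2_+$ on which it lies, taking $+$ to be the side containing $\partial D_n$. At each $\partial D_n$-bubble $B$ whose other arc is on $C_j'$, the labelling convention of Definition~\ref{def1} places $B$ on the side of $C_j'$ containing $B$'s main arc $a\subset\partial D_n$; since $a$ is disjoint from $C_j'$ and lies on $\partial D_n$, this is the outer side of $C_j'$, so all $p_j$ other-arc bubbles receive $+$. Type-I arcs of $C_j'$ labelled $R$ correspond exactly to consecutive same-sign pairs in the cyclic sign sequence, so $R(C_j')=(p_j+q_j)-2k_j$ where $k_j$ counts the cyclic $\pm$-blocks of $C_j'$. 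Summing over $j$ gives
\[
\sum_{j=1}^{r} R(C_j') \;=\; m_0 + 2Q - 2K, \qquad K := \sum_{j=1}^{r} k_j.
\]

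The lemma therefore reduces to establishing $K\le Q$, which I would prove via the claim: at each interior bubble $I$ of $R_+$, at most one of the two loops visiting $I$ can have $I$ on its inner side. Assuming for contradiction that both visiting loops $C_1,C_2$ place $I$ on their inner sides, the other-arc convention forces $C_2$'s arc at $I$ to lie on the inner side of $C_1$, so by disjointness $C_2\subset D_{C_1}^{\mathrm{in}}$, where $D_{C_1}^{\mathrm{in}}$ is the disk bounded by $C_1$ not containing $\partial D_n$. Symmetrically $C_1\subset D_{C_2}^{\mathrm{in}}$. But $C_2\subset D_{C_1}^{\mathrm{in}}$ forces $D_{C_2}^{\mathrm{in}}\subset D_{C_1}^{\mathrm{in}}$, so $C_1\subset D_{C_2}^{\mathrm{in}}\subset D_{C_1}^{\mathrm{in}}$, contradicting $C_1=\partial D_{C_1}^{\mathrm{in}}$. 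The claim bounds the total number of $-$-side visits to interior bubbles by $Q$; since each $-$-block of any $C_j'$ contains at least one such visit, $K\le Q$, and combining with the displayed identity yields $\sum_j R(C_j')\ge m_0$.

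The main obstacle is pinning down the ``side of a bubble'' convention from Definition~\ref{def1} and Figure~\ref{cyclic word} unambiguously, so that both the sign assignment for the $\partial D_n$-bubbles and the Jordan-curve nesting contradiction at interior bubbles work in tandem. Once that convention is fixed, the remainder of the argument is purely topological and relies only on disjointness of the loops of $F\cap S^2_+$ together with condition~(2) of normal position; the bound is moreover tight, since configurations with $\sum_j R(C_j') = m_0$ are easy to construct.
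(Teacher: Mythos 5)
Your counting identity breaks down at two places, and both failures trace back to your decision to forgo Lemma~\ref{algorithm} (hence Theorem~\ref{Main}). First, the equation $\sum_j q_j=2Q$ is not justified: an interior bubble of $R_+$ is indeed met by distinct loops of $F\cap S^2_+$ lying in $R_+$ (condition~(2)), but there is no reason both of those loops belong to your enumerated family $C_1',\ldots,C_r'$, which consists only of loops carrying an ``other arc'' of one of the $m_0$ saddles. For instance, with $m_0=1$ and a single enumerated loop $C_1'$ crossing one interior bubble whose second upper arc lies on a non-enumerated loop, one gets $\sum_j q_j=1<2Q$. All you can actually assert is $\sum_j R(C_j')=m_0+\sum_j q_j-2K$, and your block argument only yields $\sum_j q_j\ge K$ (one interior $-$-visit per $-$-block), not the needed $\sum_j q_j\ge 2K$; in the example just described your restricted count would be $0<m_0$, and nothing in disjointness plus condition~(2) rules that configuration out. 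Ruling it out is exactly what essentiality buys: Lemma~\ref{algorithm} (via Theorem~\ref{Main}) forbids a loop from meeting its saddles in such an alternating pattern, and the paper's proof of this lemma uses that input from the very first step ($m_0=1$) and then runs an iterative, region-by-region count through paired-up saddles. So your claim that the lemma is ``purely topological'' and needs neither result is the core gap, not a cosmetic one.

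Second, the paper allows bubbles containing several saddles (this is the whole reason for the virtual-bubble machinery), and your argument silently assumes one saddle per bubble. With $1+l$ saddles, an interior bubble carries $2(1+l)$ upper arcs, so ``met by exactly two distinct loops'' fails; your nesting contradiction also fails there, because two visiting arcs can lie on the \emph{same} side of the over-strand, in which case two nested loops can both have the crossing on their inner side (your contradiction uses that the two arcs sit on opposite sides of the over-strand, which is only automatic for a one-saddle bubble), so $K\le Q$ is not established. Moreover $C_j'$ may meet a bubble that $\partial D_n$ meets via an arc of a \emph{different} saddle in that bubble; such visits are counted in neither $p_j$ nor $q_j$ and need not carry the sign $+$, so even the decomposition $t_j=p_j+q_j$ underlying your formula is incomplete. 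The sign-block bookkeeping itself (and the observation that the $m_0$ ``other-arc'' bubbles are $+$ for the enumerated loops) is a nice idea and genuinely different from the paper's inductive count, but as written the proof does not go through without reinstating Lemma~\ref{algorithm} and handling multi-saddle bubbles.
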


\begin{proof}
We assume by convention the region that includes $m_{0}$ saddles is the inner one. Suppose $m_{0}=1$, the region still contains at least one loop, admitting 2 $R$'s on the corresponding edges in pullback graph of $F$ by Lemma \ref{algorithm}. Suppose $m_{0}\geq 2$, and if the $m_{0}$ saddles are all paired up with each other, then the loop(s) contained in the region admits at least $m_{0}$ $R$'s. If $m_1$ of the saddles are not paired up, then the $m_{0}-m_{1}$ saddles are paired up through loop(s) admitting at least $m_{0}-m_{1}$ $R$'s, while the other saddles are going to induce loop(s) intersecting the not paired up saddles. Assume these loop(s) induce $k_{1}$ $R$'s and include at least $m_2$ saddles on the inner region(s) which does not include those $m_1$ saddles mentioned above, where $k_{1}$ is a positive integer, then $k_{1}+m_2\geq m_{1}$. If $m_{2} \geq 1$, we can repeat this counting process on the inner region(s) bounded by the loop(s). In the $(i+1)$th step we obtain $m_{i}-m_{i+1}$ $R$'s through paired up saddles, $k_{i+1}$ $R$'s through the not paired up saddles, and $m_{i+2}$ saddles in the inner region(s) obtained through the $(i+1)$th step, where $k_{i+1}+m_{i+2} \geq m_{i+1}$. As we repeat this process, after finitely many steps we will obtain second innermost region(s) that includes $m_s$ saddle(s) in total, and innermost loop(s) admitting at least $m_s$ $R$'s. In total, we can obtain more or equal to $m_{0}$ type $I$ arcs which are labeled with $R$.\par

\end{proof}

\begin{figure}[htp] 

  \centering

  \begin{tabular}{cc}


    \includegraphics[width=50mm]{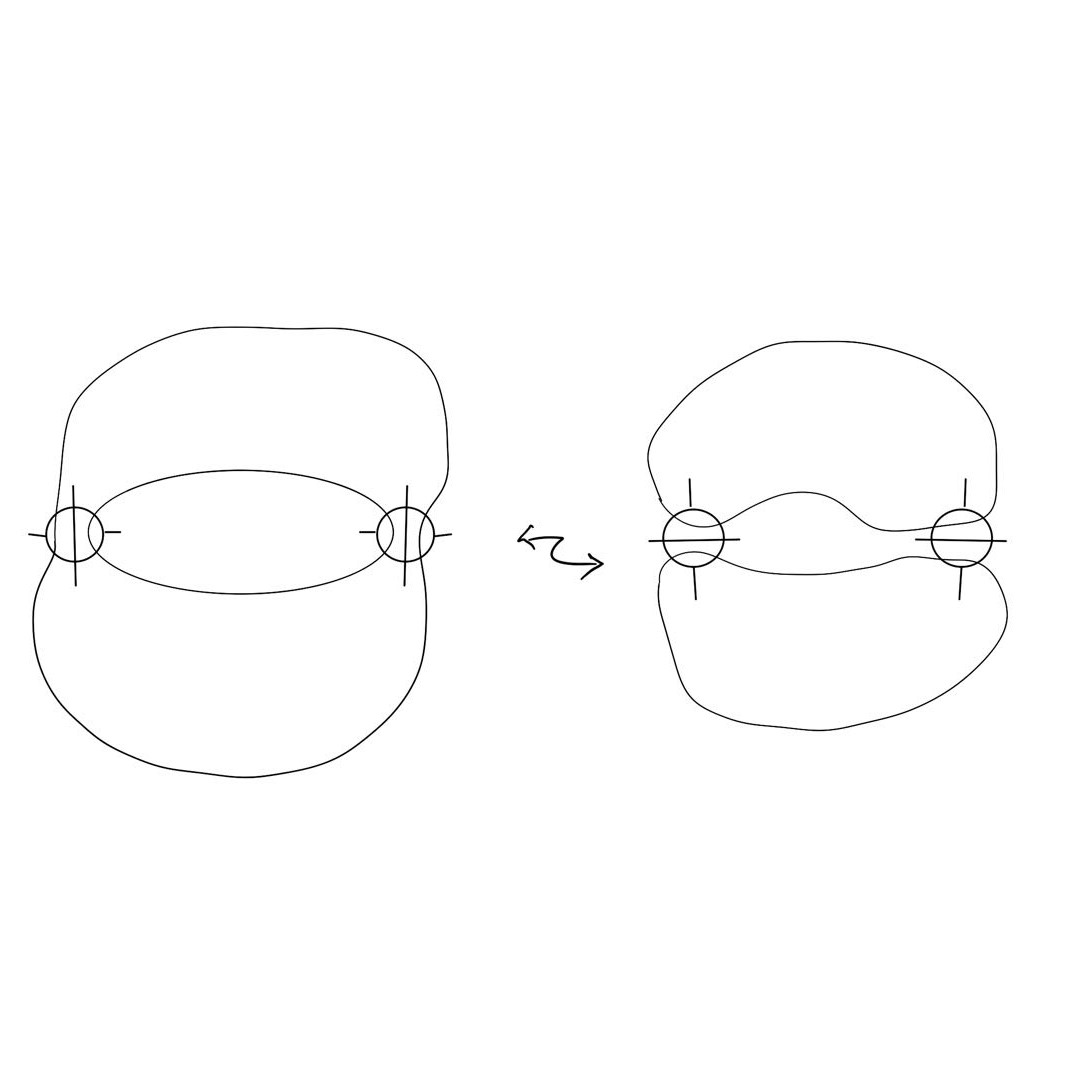}&

    \includegraphics[width=50mm]{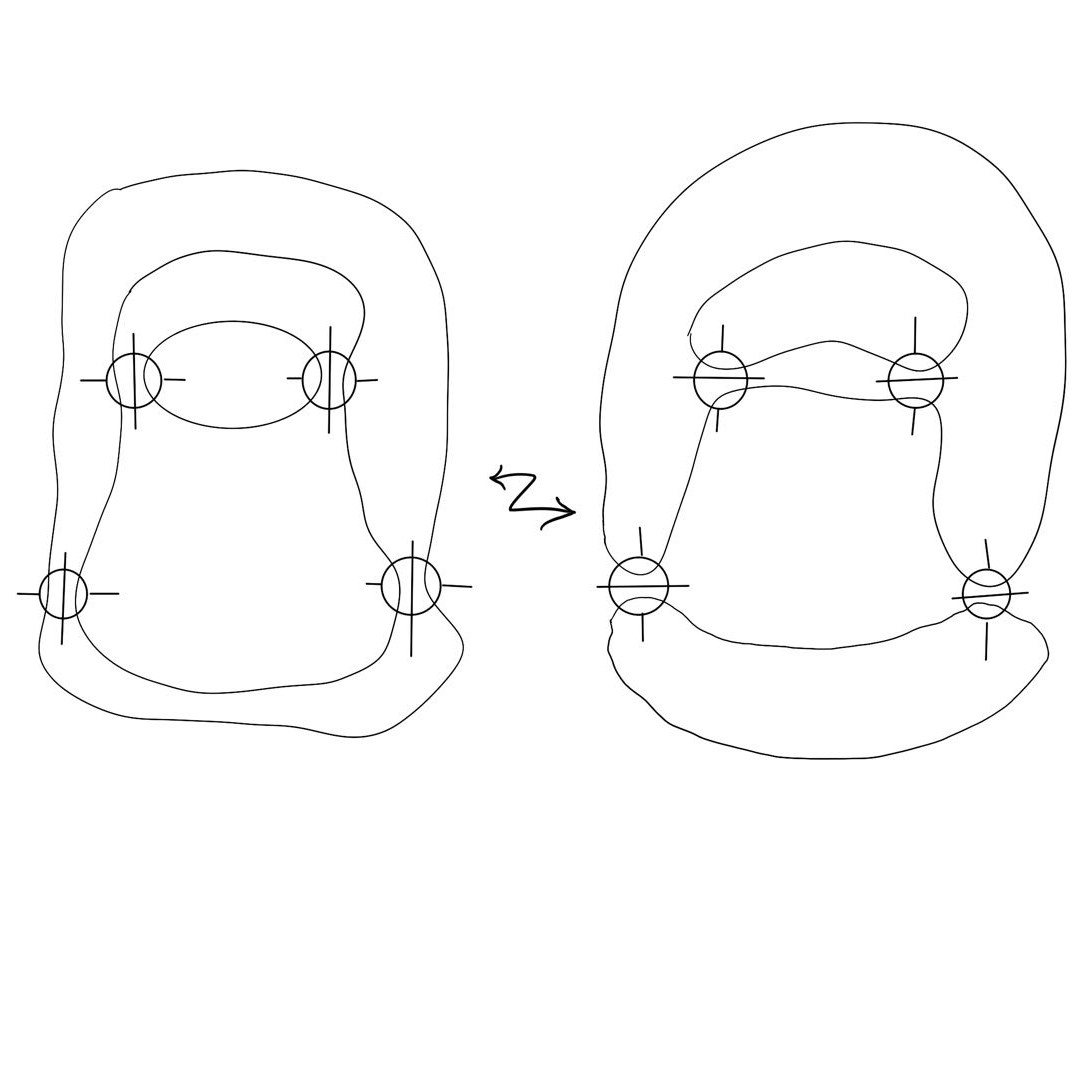}\\

    \includegraphics[width=50mm]{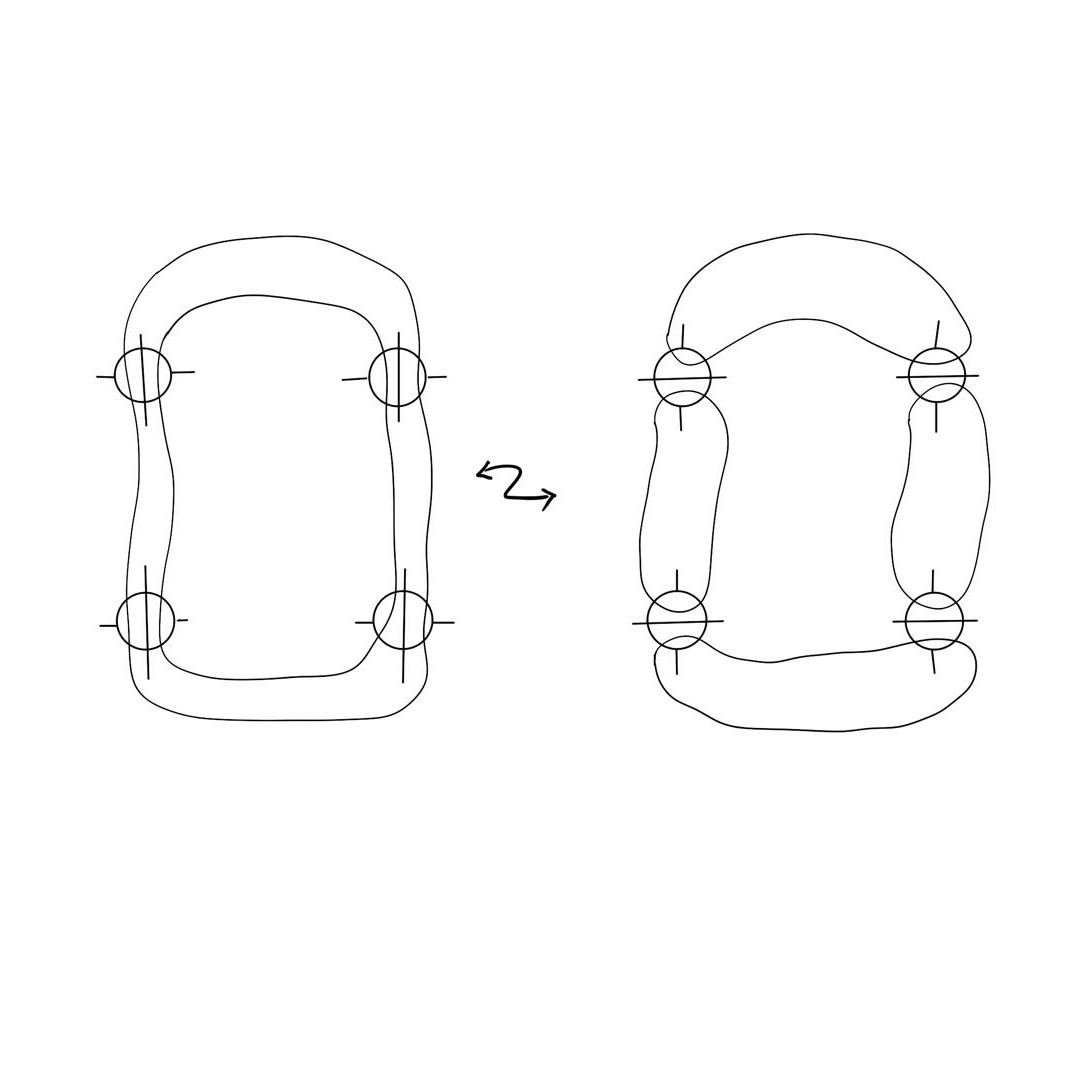}&

    \includegraphics[width=50mm]{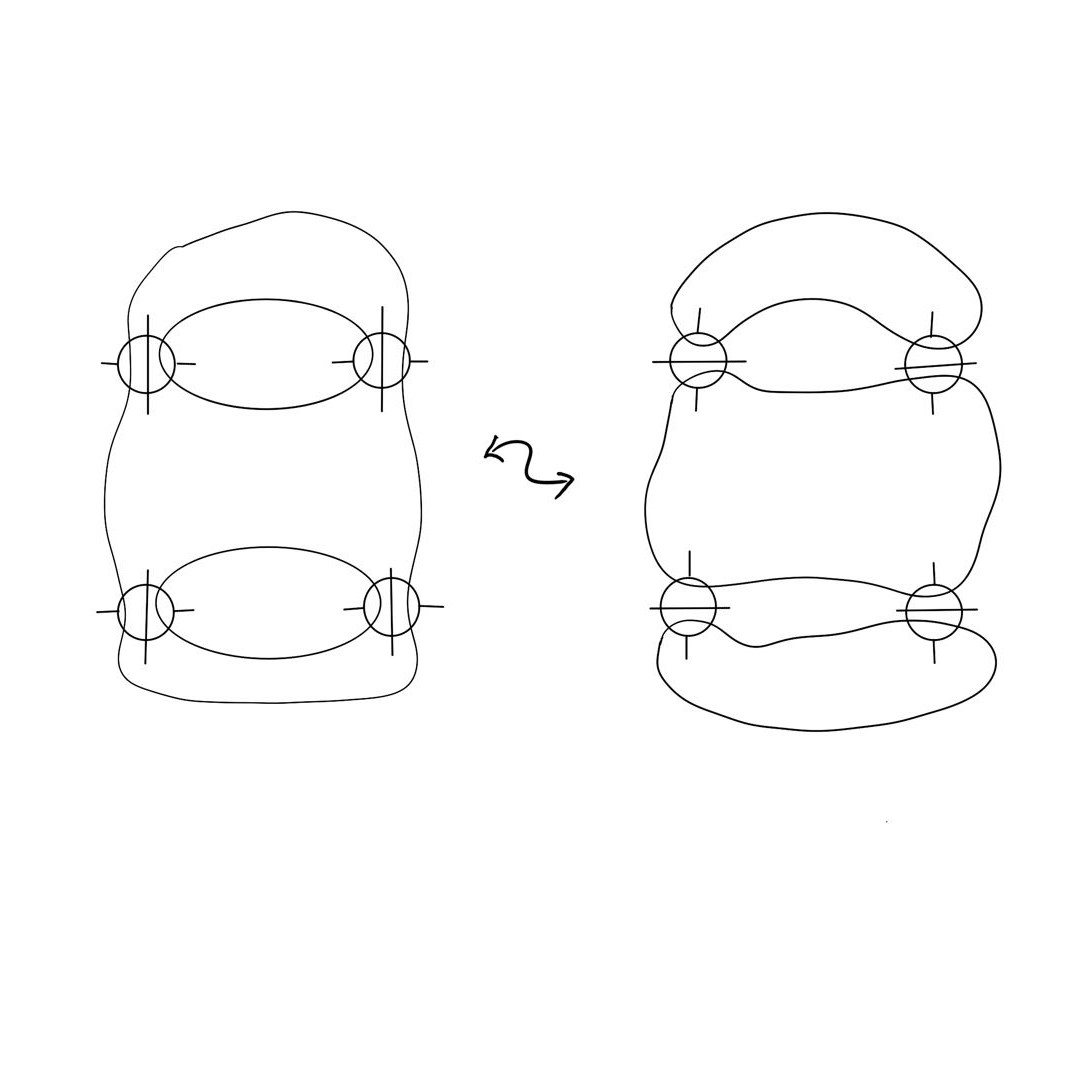}\\

    \includegraphics[width=50mm]{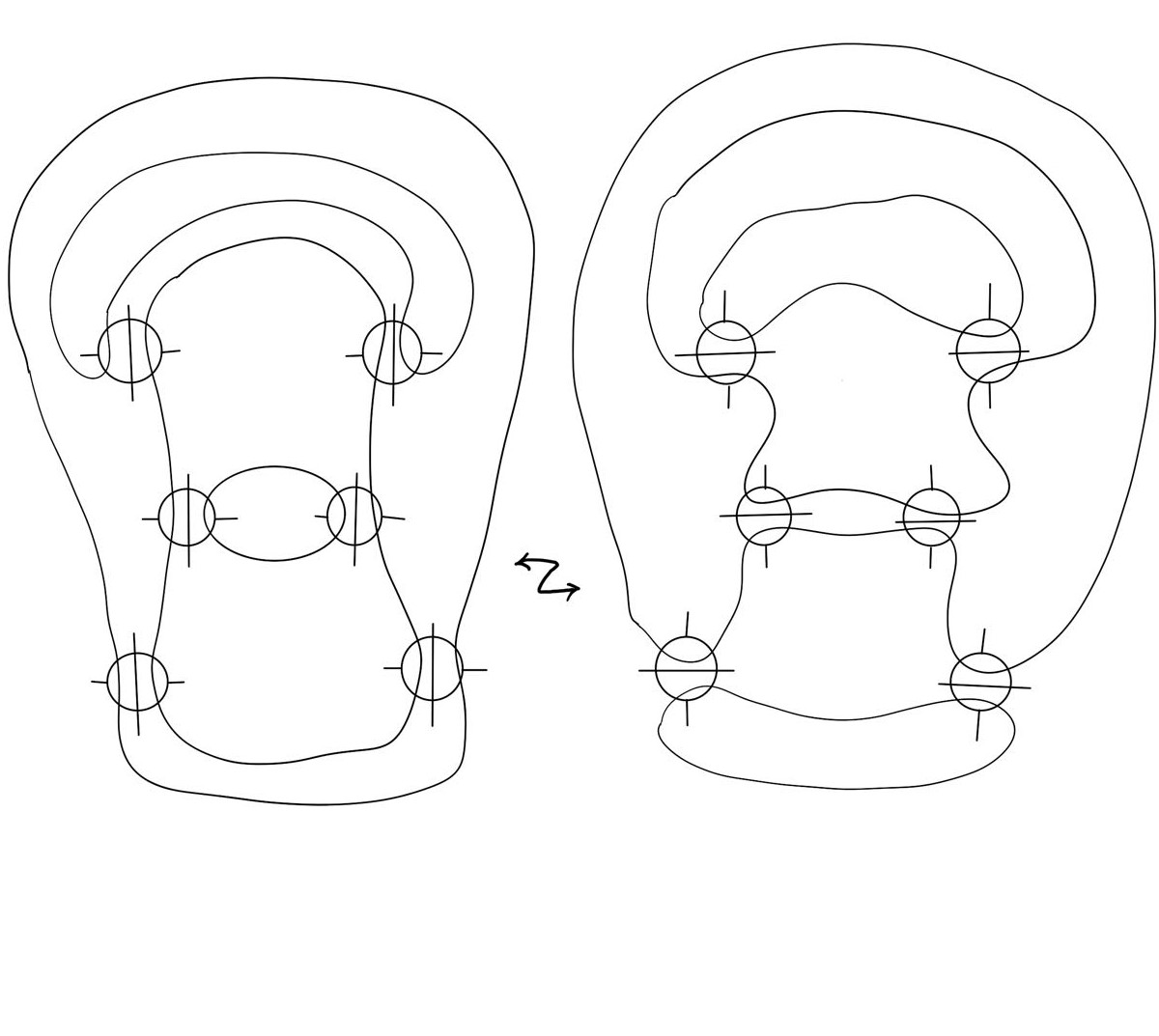}&

    \includegraphics[width=50mm]{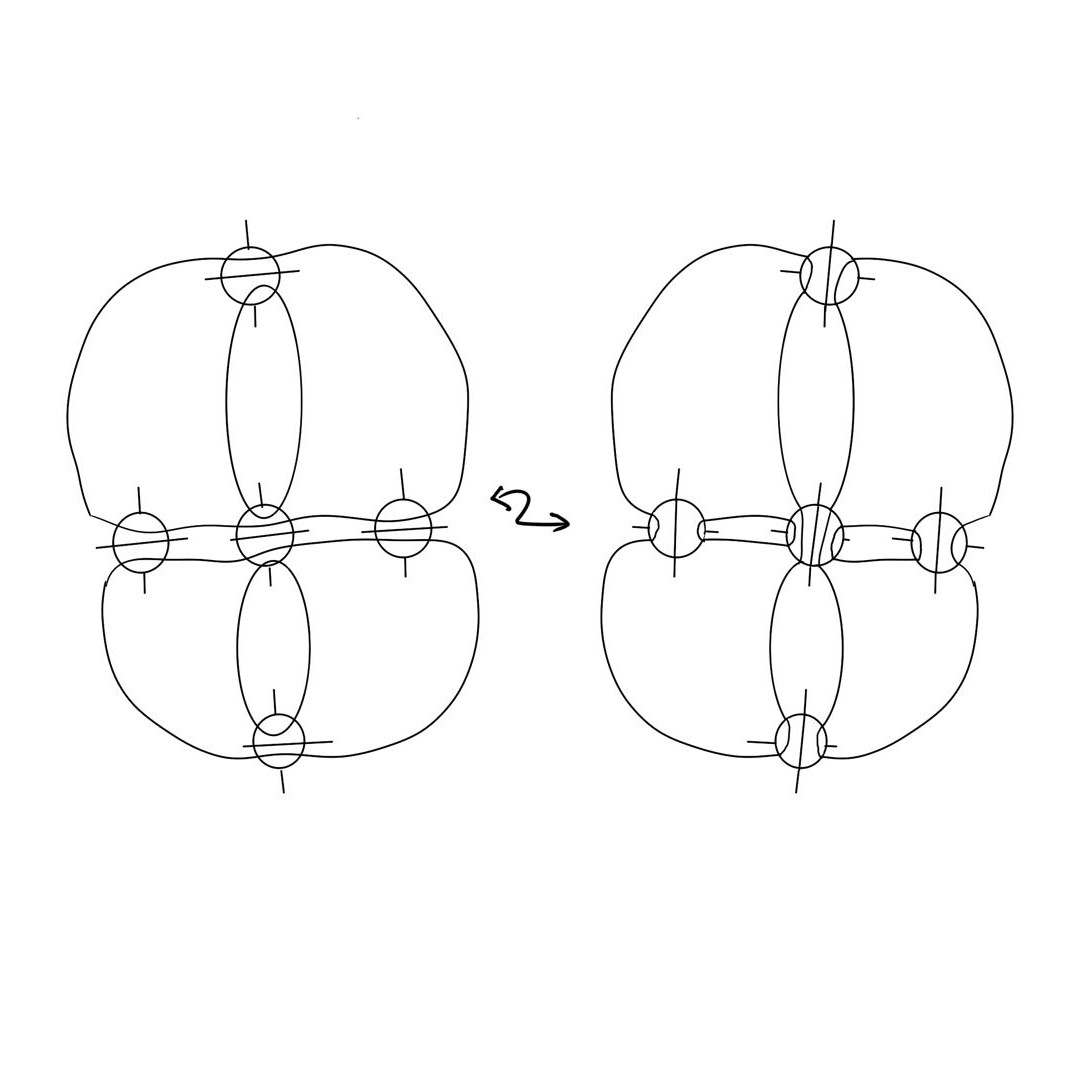}\\

  \end{tabular}
   
   \caption{The configurations of sphere when $|R|=4$, $6$, or $8$.}
\label{Rspheres}

\end{figure}

\subsection{Proof of Theorem \ref{characterization}}

\begin{proof}

To view (a), according to Lemma \ref{algorithm}, when $|R|=4$ or $6$, we only need to check the situation in which $F\cap S_{+}^{2}$ contains two or three loops. When $|R|=8$ the maximal loop number becomes 4. In each of the cases we can calculate the Euler characteristics of $F$ to show it is a sphere, see Fig.\ref{Rspheres},  except for when $|R|=8$ there are only two configurations of torus shown in Fig. \ref{R8torus}. \par

\begin{figure}[htp]

  \centering

  \begin{tabular}{cc}

  \includegraphics[width=60mm]{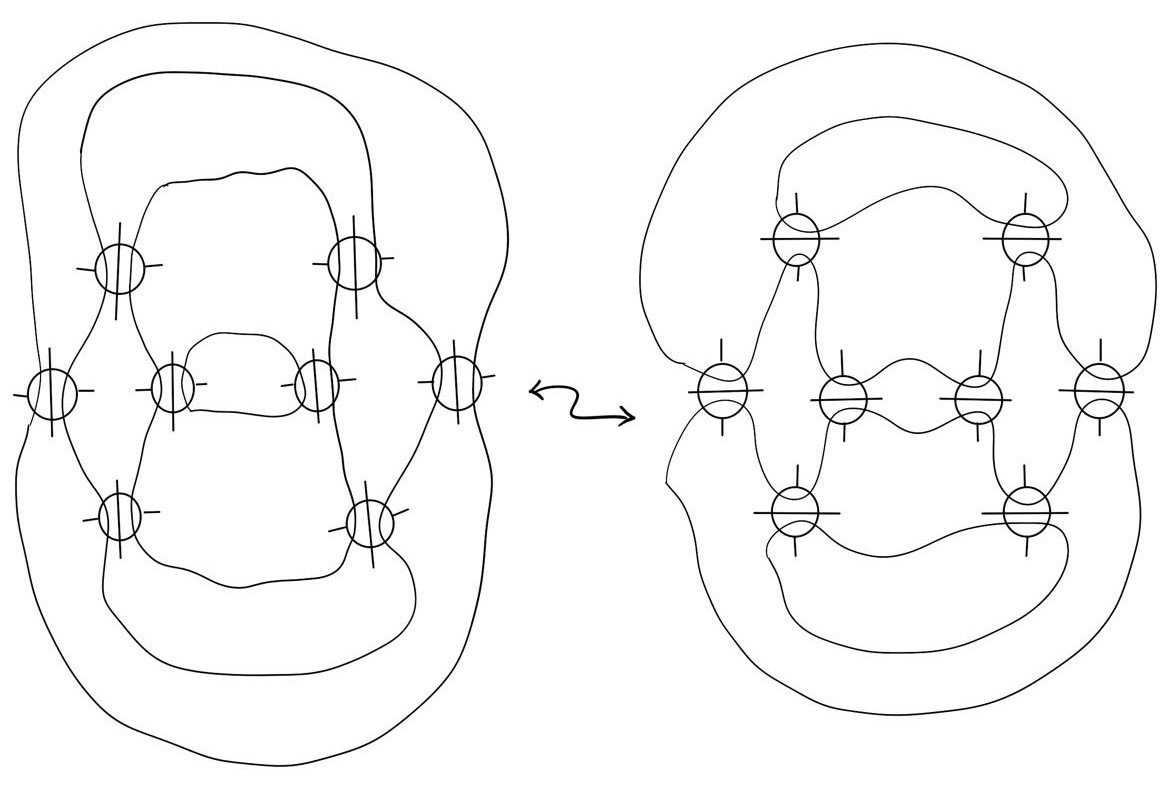}&

    \includegraphics[width=60mm]{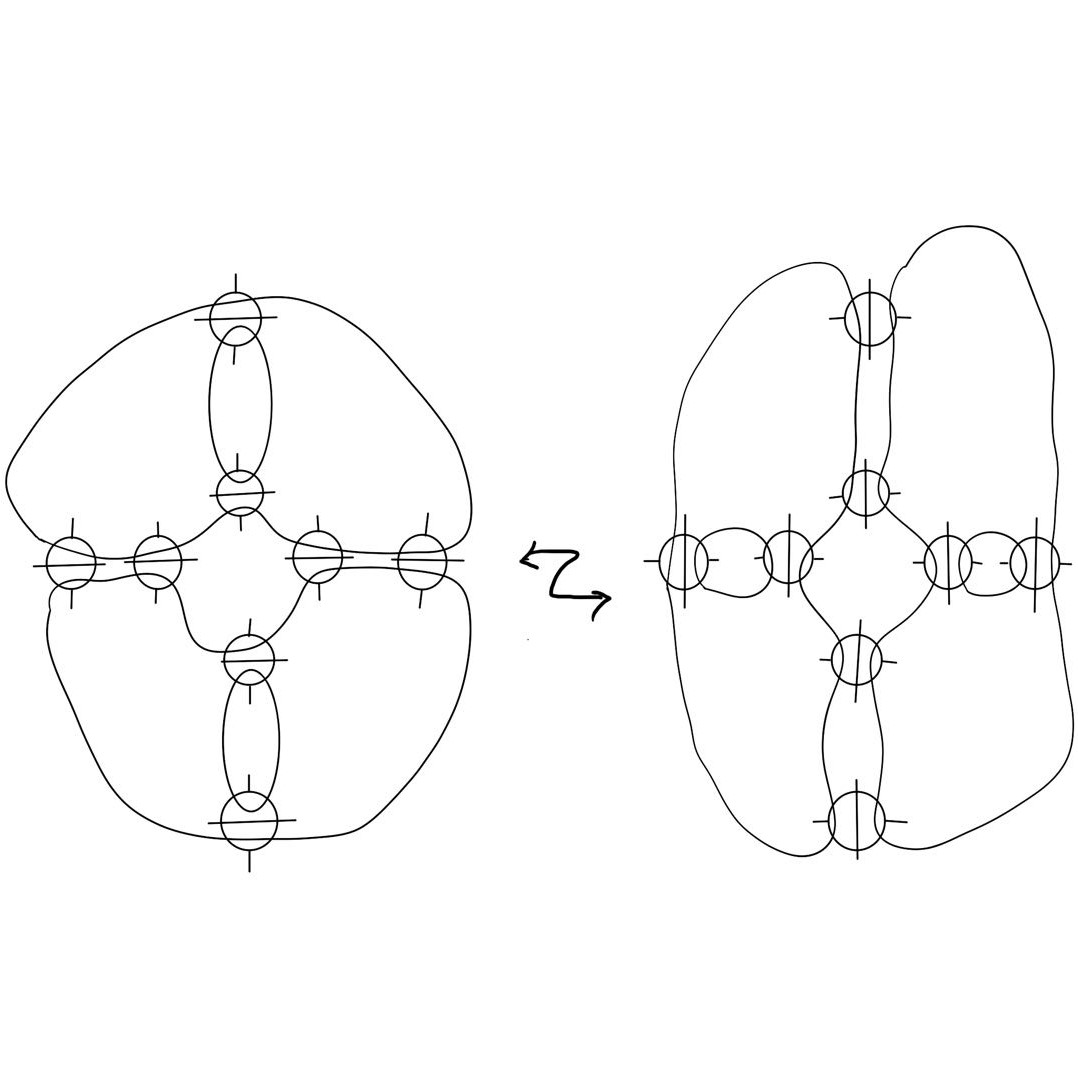}\\

  \end{tabular}
\caption{The configurations of torus when $|R|=8$.}

  \label{R8torus}
\end{figure}

To prove (b), we assume on the pullback graph, $D_N$ is an $N$-gon so that $N$ is the maximum vertex number of the $n$-gons. Note that on $F\cap S^{2}_{+}$ the corresponding loop of $\partial D_N$ divides $S_{+}^{2}$ into two regions, and this loop intersects with a total of $N$ saddles. Then according to Lemma \ref{lower bound}, these two regions contain loops admitting at least $N$ type $I$ arcs which are labeled with $R$. Moreover, $\partial D_N$ admits at least 2 $R$'s. Therefore $N\leq |R|-2$.\par

To prove (c), first we note that Lemma \ref{algorithm} implies the number of regions in the pullback graph is bounded by $|R|$. And by (b) the maximum vertices number of a region in the pullback graph is bounded by $|R|-2$. Therefore, only finitely many configurations exist, due to the finite amount of crossing numbers and type $I$ arcs which are labeled with $R$. Note that each configuration corresponds to an isotopic class of $F$ when it is essential.\par

To prove (d), we calculate the $\chi(F)$ through the pullback graph. $\chi(F)$ is equal to the vertex number $\mathcal{V}$ subtract the edge number $\mathcal{E}$, and plus the region number $\mathcal{F}$ in the pullback graph. Thus, $\chi(F)=\mathcal{V}-\mathcal{E}+\mathcal{F}$=$\sum_{n=2}^{\infty}\frac{n}{4}F_{n}-\sum_{n=2}^{\infty}\frac{n}{2}F_{n}+\sum_{n=2}^{\infty}F_{n}$=$\sum_{n=2}^{\infty}F_{n}-\sum_{n=2}^{\infty}\frac{n}{4}F_{n}$. Here $\sum_{n=2}^{\infty}F_{n}$ equals the region number $\mathcal{F}$, and by Lemma \ref{algorithm} $\mathcal{F}\leq |R|$. $\sum_{n=2}^{\infty}\frac{n}{4}F_{n}$ equals the vertex number $\mathcal{V}$, which is equal to $|S|$. By (b) the maximum possible value of $n$ is $|R|-2$. Therefore we have $\chi(F)=\sum_{n=2}^{|R|-2}F_{n}-\sum_{n=2}^{|R|-2}\frac{n}{4}F_{n}=\sum_{n=2}^{|R|-2}F_{n}-|S| \leq |R|-|S|$. We notice this also implies (c).\par

\end{proof}

\section*{Acknowledgement}

I would like to thank William Menasco for his guidance and many helpful discussions.

\end{document}